\newcommand{\Addresses}{{
  \bigskip
  \footnotesize

  Rahul Singh, \textsc{Department of Mathematics, Louisiana State University, Baton Rouge, LA, USA}\par\nopagebreak
  \textit{E-mail address}: \texttt{rahulsingh@lsu.edu}

  \medskip

  Anton M. Zeitlin, \textsc{School of Mathematics, Georgia Institute of Technology, Atlanta, GA, USA}\par\nopagebreak
  \textit{E-mail address}: \texttt{zeitlin@gatech.edu}
}}
\theoremstyle{plain}
\newtheorem{theorem}{Theorem}[section]
\newtheorem{proposition}{Proposition}[section]
\newtheorem{corollary}{Corollary}[section]
\newtheorem{lemma}[theorem]{Lemma}
\newtheorem{fact}{Fact}[section]
\newtheorem{notation}{Notation}[section]
\theoremstyle{definition}
\newtheorem{definition}{Definition}
\theoremstyle{remark}
\newtheorem{remark}{Remark}[section]
\newtheorem{example}{Example}
\newtheorem{algorithm}{Algorithm}
\title{\MakeLowercase{$QQ$-systems and tropical geometry}}
\author{Rahul Singh, Anton M. Zeitlin}
\date{}
\DeclareMathOperator{\Def}{\text{Def}}
\DeclareMathOperator{\trop}{\text{trop}}
\DeclareMathOperator{\rank}{\text{Rank}}
\def\subsubsection{\@startsection{subsubsection}{3}%
\z@{.5\linespacing\@plus.2\linespacing}{-.5em}%
{\smallfont\bfseries}}
\begin{document}
\begin{abstract}
We investigate the system of polynomial equations, known as $QQ$-systems, which are closely related to the so-called Bethe ansatz equations of the XXZ spin chain, using the methods of tropical geometry. 
\end{abstract}
\maketitle

\tableofcontents
\section{Introduction}

The $QQ$-system is a system of difference equations that emerge in various representation-theoretic contexts. In the theory of quantum integrable systems, they appear as the relations between eigenvalues of the renowned {\it Baxter operators} \cite{Baxterbook}. 
The underlying representation theory of affine quantum groups provides a theoretical basis \cite{BLZ3}, \cite{FH}, \cite{FH2}, \cite{HJ} behind the construction of these operators, where the $QQ$-system describes the relations between generators of the extended Grothendieck ring of finite-dimensional representations of an affine quantum group, and Baxter operators are the twisted half-traces of certain R-matrix operators acting within this extended ring. 
Here, the Cartan-valued ``twist" parameter corresponds to the twisted boundary conditions for integrable models. 
In this context, the eigenvalues of Baxter operators are polynomials of one variable, and the roots of these polynomials can be identified upon certain non-degeneracy conditions with the solutions of the so-called {\it Bethe equations} (see, e.g., \cite{KBI}, \cite{Faddeev}, \cite{R}), characterizing the spectrum of the related integrable model, known as the XXZ model. We note that the QQ-systems emerged naturally in the context of the study of Bethe equations as well (see, e.g., \cite{Krichever},\cite{MVdiscrete},  \cite{MVpopulations}, \cite{MVquasipolynomials}).

One can consider various limits of the $QQ$-systems, corresponding to Bethe equations associated with quantum integrable systems based on Yangians (XXX models) as well as just simple Lie algebras, known as Gaudin models. However, these limits lack such an explicit representation-theoretic presentation in terms of an extended Grothendieck ring of representations.

At the same time, the Gaudin integrable models and the related limit of the $QQ$-system, the so-called $qq$-system, which is a system of differential equations, have a geometric interpretation from a completely different angle: from the point of view of geometric Langlands correspondence. In this case, the $qq$-system characterizes the connections of a specific type on a projective line, called opers, corresponding to the group with Langlands dual Lie algebra. In this case, the twist parameters define the constant Cartan connection, which these opers are gauge equivalent to.  
This relation between \cite{FFR_Opers}, \cite{F_Opers}, \cite{F_Gaudin} Gaudin integrable models characterizing the $D$-modules on the moduli stack of $G$-bundles and local systems for the Langlands dual group on the projective line provided one of the simplest nontrivial examples of geometric Langlands correspondence \cite{F_Loop}.

Recently, this example was successfully deformed \cite{FKSZ}, \cite{KSZ}, \cite{KZ2}, \cite{KZ3} to incorporate various possible $QQ$-systems, which use the multiplicative version of the connection on the projective line, known as the $q$-oper (see also earlier work \cite{MVdiscrete}). The twist parameters are incorporated in the Cartan-valued connections, which these multiplicative connections are $q$-gauge equivalent to. From this point of view, the $QQ$-systems can be interpreted as the relation between generalized minors for certain associated meromorphic sections of principal bundles, generalizing older determinantal formulas related to the so-called Lewis Carroll formulas, which were used before in the theory of Bethe ansatz equations in the $\mathfrak{sl}_n$-case.

Simultaneously, the recently discovered connection proposed by Nekrasov and Shatashvili in the framework of theoretical physics \cite{NS1}, \cite{NS2} between integrable models based on quantum groups / Yangians \cite{CP} and the enumerative geometry of Nakajima quiver varieties \cite{G}, and continued with a string of mathematical results, e.g.,  \cite{BMO} \cite{MO}, \cite{OS}, \cite{O}, \cite{PSZ}, \cite{KPSZ}, \cite{KSZ}, \cite{Zlectures}, \cite{Zaslow}, gave a new interpretation of the $QQ$-system, as the relations within the equivariant quantum K-theory / cohomology ring of the corresponding variety.  The K\"ahler parameters that provide the deformation of the quantum K-theory/cohomology ring give another realization of the twist parameters for the $QQ$-systems. When K\"ahler parameters become zero, the $QQ$-system describes the relations within the classical equivariant cohomology / K-theory, which is a much simpler system of polynomial equations, which are much easier to solve.

In this paper, we call this ``classical" limit an {\it infinite} $QQ$-system\footnote{This name is because it is more natural for our study to take as parameters the inverse to the K\"ahler ones.}. We try to answer the natural question: How can one construct explicit solutions of the $QQ$-systems around such infinite solutions? 
One can partially read the answer to this question for $QQ$-systems corresponding to integrable models based on quantum groups and using the interpretation of polynomial solutions as eigenvalues of Baxter operators. That can give insight into the existence of such deformations, but it does not provide an explicit answer. Moreover, for $QQ$-systems and $qq$-systems corresponding to integrable models based on Yangians and Gaudin models, respectively, one has to rely on the appropriate limits of the deformation parameter.   

Instead, in this paper, we use the methods of tropical geometry that extend the classes of $qq/QQ$-systems for which one can construct analytic solutions in terms of twist parameters. 
For an arbitrary simple Lie algebra $\mathfrak{g}$ (resp. connected, simply connected, simple algebraic group $G$) of rank $r$, we prove that such analytic solutions exist for any isolated solution of the infinite $qq$ (resp. $QQ$)-system. This statement allows us to directly prove some structural theorems for the corresponding opers and their deformations. 
Moreover, in the simplest non-trivial case of $qq/QQ$-systems related to the $\mathfrak{sl}_2$ Lie algebra, we provide an algorithmic way of writing down the solutions for a large enough value of a deformation parameter.

We note here also some parallel recent progress, e.g., \cite{Volin}, \cite{nepomechie}, within the theoretical physics community in finding solutions to  Bethe ansatz equations via $QQ$-systems, using a combination of representation-theoretic realizations and algorithms. 

Let us give an outline of the structure of the article. In Section \ref{preliminaries}, we recall the definitions of $qq$-systems and $QQ$-systems. 
In Section \ref{Motivation} we give motivation for the $QQ$-systems, their classical limit, and the way they emerge in the enumerative geometry of quiver varieties and the theory of integrable systems. In Section \ref{mainresults}, we state the main results of this article. In Section \ref{tropicalgeometry}, we recall a few definitions and standard facts from tropical algebraic geometry. In Sections \ref{proofqq} and \ref{proofQQ}, we give proofs of deformation for $qq$-systems and $QQ$-systems respectively. We give an application of deformation in the $\mathfrak{sl}_2$-case and $SL_{2}$-case in Sections \ref{applicationsl2} and \ref{applicationspinchain}, respectively. 

\subsection{Acknowledgements} The first named author is grateful to Josephine Yu and Kiumars Kaveh for answering various questions related to tropical geometry. The first named author would also like to thank Nikkos Svoboda for helping him setup Gfan. The second named author is partially supported by the NSF grant DMS-2526435 (formerly DMS-2203823).

\section{Preliminaries}\label{preliminaries}
Let us introduce the following notations.
\begin{notation}
By $G$ we will denote a connected, simply connected, simple algebraic group of rank $r$ over $\mathbb{C}$. Let $H$ be a maximal torus of $G$ and let $B_{-}$ be a Borel subgroup of $G$ containing $H$. Let $N_{-}$ be the unipotent radical of $B_{-}$ and let $B_{+}$ be the opposite Borel subgroup containing $H$. Let $\{\alpha_{1},\ldots,\alpha_{r}\}$ be the set of simple roots corresponding to the choice of the pair $(B_{+},H)$. Let $\{ \check{\mathrm{\alpha}}_{1},\ldots,\check{\mathrm{\alpha}}_{r}\}$ be the corresponding coroots and let $(a_{ij})$ denote the associated Cartan matrix (recall that $a_{ij}=\langle\alpha_{j},\check{\mathrm{\alpha}}_{i}\rangle$). Let $\mathfrak{g}$, $\mathfrak{h}$, $\mathfrak{b}_{-}$, $\mathfrak{b}_{+}$ and $\mathfrak{n}_{-}$ denote the Lie algebras of $G$, $H$, $B_{-}$, $B_{+}$ and $N_{-}$ respectively. We denote by $\{e_{i},f_{i},\check{\mathrm{\alpha}}_{i}\}_{i=1,\ldots,r}$ the corresponding Chevalley generators.
\end{notation}
\subsection{$qq$-systems}
A $qq$-system is a non-linear system of differential equations depending on $r$ polynomials $\Lambda_{1}(z),\ldots,\Lambda_{r}(z)$ and a semisimple element $Z^{H}\in\mathfrak{h}$. More precisely (see, e.g., \cite{MVquasipolynomials}, \cite{BSZ}, \cite{Zwronskians}) we have the following definition:
\begin{definition}\label{finiteqq}
The $qq$-system associated to $\mathfrak{g}$, monic polynomials $\Lambda_{1}(z),\ldots,\Lambda_{r}(z)$ and a semisimple element $Z^{H}\in\mathfrak{h}$ is the system of equations
\begin{equation}\label{qqsystem}
W(q_{+}^{i},q_{-}^{i})(z)
+
\langle\alpha_{i},Z^H\rangle
q_{+}^{i}(z)q_{-}^{i}(z)
=
\Lambda_{i}(z)\prod_{j\neq i}\big[q_{+}^{j}(z)\big]^{-a_{ji}},\quad i=1,\ldots,r,
\end{equation}
where the Wronskian $W(f,g)(z)$ of two rational functions $f(z)$ and $g(z)$ is given by
\[
W(f,g)(z)
=
f(z)\partial_{z}g(z)-g(z)\partial_{z}f(z).
\]
\end{definition}
\begin{definition}
    The infinite $qq$-system associated to $\mathfrak{g}$ and monic polynomials $\Lambda_{1}(z),\ldots,\Lambda_{r}(z)$ is the system of equations
\begin{equation}\label{infiniteqqsystem}
q_{+}^{i}(z)q_{-}^{i}(z)
=
\Lambda_{i}(z)\prod_{j\neq i}\big[q_{+}^{j}(z)\big]^{-a_{ji}},\quad i=1,\ldots,r,
\end{equation}
where the $q_{+}^{j}$'s are assumed to be monic.
\end{definition}
\begin{remark}\label{qqfinitetoinfinite}
    The system \eqref{infiniteqqsystem} is obtained from the system \eqref{qqsystem} by setting $\tilde{q}_{-}^{i}(z):=\xi_{i} q_{-}^{i}(z)$, where $\xi_{i}:=\langle\alpha_{i},Z^H\rangle$ and letting $\xi_{i}\to\infty$ for each $i$.
\end{remark}
\begin{example}
   For $\mathfrak{g}=\mathfrak{sl}_{2}$ and $Z^{H}\neq 0$, the $qq$-system is the equation 
   \begin{equation}\label{sl2qq}
       q_{+}(z)q_{-}(z)+ tW(q_{+},q_{-})(z)
=
\Lambda(z)
   \end{equation}
where $t:=\xi^{-1}$ and the infinite $qq$-system is the equation 
\begin{equation}\label{sl2infiniteqq}
    q_{+}(z)q_{-}(z)
=
\Lambda(z)
\end{equation} for a monic polynomial $\Lambda(z)$.
\end{example}
We now refer to \cite{BSZ} for definitions and results, where it is shown that for a simply-laced $\mathfrak{g}$ and $Z^{H}$ regular semisimple, every nondegenerate $Z^{H}$-twisted Miura-Pl\"{u}cker oper with admissible combinatorics is a nondegenerate $Z^{H}$-twisted Miura oper under the assumption that $\Lambda_{i}(z)$'s are separable. This in turn gives a one-to-one correspondence between nondegenerate $Z^H$-twisted Miura opers and solutions of the Bethe Ansatz equations \eqref{betheqqg}. In their proof, the authors use the existence of deformations of solutions of given infinite $qq$-system in order to construct Backl\"{u}nd transforms of a certain initial solution of a finite $qq$-system (see \cite{BSZ}*{Section 6}). In this paper, we prove the existence of deformations of such solutions with no restrictions on $\Lambda_{i}(z)$'s for an arbitrary simple Lie algebra $\mathfrak{g}$.
\subsection{$QQ$-systems}
A $QQ$-system is a non-linear system of difference equations depending on $r$ polynomials $\Lambda_{1}(z),\ldots,$ $\Lambda_{r}(z)$ and an element $Z\in H$. More precisely, we fix $\Lambda_{i}(z)$'s and may assume without loss of generality that $\Lambda_{i}(z)$'s are monic and let $\{\zeta_{i}\}_{i=1,\ldots,r}$ be the non-zero complex numbers that correspond to $Z\in H$ via the isomorphism (recall that $G$ is assumed to be simply connected):
\[
(\mathbb{C}^{\times})^r\xrightarrow{\simeq}H, \quad (c_{1},\ldots,c_{r})\mapsto\prod_{i}\check{\mathrm{\alpha}}_{i}(c_{i}).
\]
In addition, we assume that $Z$ satisfies the following property:
\[
\prod_{i}\zeta_{i}^{a_{ij}}\not\in q^{\mathbb{Z}}, \quad 1\leq j\leq r.
\]
The above condition implies that $Z$ is, in particular, regular semisimple. Here is an explicit definition (see, e.g., \cite{MVquasipolynomials},\cite{FH2},\cite{FKSZ}):
\begin{definition}
The $QQ$-system associated to $G$, monic polynomials $\Lambda_{1}(z),\ldots,\Lambda_{r}(z)$ and $Z\in H$ is the system of equations
\begin{equation}\label{QQsystem}
\tilde{\xi_{i}}Q_{+}^{i}(qz)Q_{-}^{i}(z)-\xi_{i} Q_{+}^{i}(z)Q_{-}^{i}(qz)
=
\Lambda_{i}(z)\prod_{j\neq i}\big[Q_{+}^{j}(z)\big]^{-a_{ji}},\quad i=1,\ldots,r,
\end{equation}
where the $Q_{+}^{j}$'s are assumed to be monic and $\tilde{\xi_{i}}$ and $\xi_{i}$ are defined as:
\[
\tilde{\xi_{i}}=\zeta_{i}\prod_{j>i}\zeta_{j}^{a_{ji}}, 
\quad
\xi_{i}=\zeta_{i}^{-1}\prod_{j<i}\zeta_{j}^{-a_{ji}}.
\]
\end{definition}
\begin{definition}\label{infiniteQQ}
    The infinite $QQ$-system associated to $G$ and monic polynomials $\Lambda_{1}(z),\ldots,\Lambda_{r}(z)$ is the system of equations
\begin{equation}\label{infiniteQQsystem}
Q_{+}^{i}(qz)Q_{-}^{i}(z)
=
\Lambda_{i}(z)\prod_{j\neq i}\big[Q_{+}^{j}(z)\big]^{-a_{ji}},\quad i=1,\ldots,r,
\end{equation}
where the $Q_{+}^{j}$'s are assumed to be monic.
\end{definition}
\begin{remark}\label{QQfinitetoinfinite}
    The system \eqref{infiniteQQsystem} is obtained from the system \eqref{QQsystem} by setting $\tilde{Q}_{-}^{i}:=\tilde{\xi_{i}} Q_{-}^{i}$, and letting $\hat{\xi}_{i}\to\infty$ for each $i$, where $\hat{\xi_{i}}:=\prod_{j}\zeta_{j}^{a_{ji}}$. 
\end{remark}
\begin{example}
   For $G=SL_{2}$, the $QQ$-system is the equation
   \begin{equation}\label{sl2QQ}
       \zeta Q_{+}(qz)Q_{-}(z)-\zeta^{-1} Q_{+}(z)Q_{-}(qz)
=
\Lambda(z)
\end{equation}
and the infinite $QQ$-system is the equation 
\begin{equation}\label{infinitesl2QQ}    
   Q_{+}(qz)Q_{-}(z)
=
\Lambda(z)
\end{equation}
obtained by letting $t\to 0$, where $t:=\zeta^{-2}$.
\end{example}
\subsection{Puiseux series}\label{basicsonpuiseuxseries}
Recall that a Puiseux series with coefficients in $\mathbb{C}$ is an expression of the form
    \[
    \sum_{k=k_{0}}^{\infty}c_{k}t^{k/n}
    \]
    where $n\in\mathbb{Z}_{>0}$, $k_{0}\in\mathbb{Z}$ and $c_{k}\in\mathbb{C}$.
The Puiseux series with coefficients in $\mathbb{C}$ form a field, which we will denote by $L$. The natural valuation $\nu:L\backslash \{0\}\rightarrow \mathbb{Q}$, $f\mapsto k_{0}/n$ makes $L$ into a valued field. We define the valuation of $0\in L$ as $+\infty$.
\begin{definition}\label{convergentpuiseuxseries}
    A Puiseux series $f\in L$ is said to be convergent if there exists a neighborhood $U$ of $0$ such that $f$ is convergent on $U$ (resp. $U\backslash\{0\}$) when $\nu(f)\geq 0$ (resp. $\nu(f)<0$).
\end{definition}
The convergent Puiseux series with coefficients in $\mathbb{C}$ form a field, which we will denote by $L_{c}$. The restriction of the valuation $\nu$ to $L_{c}\backslash \{0\}$ makes $L_{c}$ into a valued field with the residue field $\mathbb{C}$.
\begin{fact}
    The field $L_{c}$ is algebraically closed.
\end{fact}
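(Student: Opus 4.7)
The plan is to show that every nonconstant monic polynomial $P(X)\in L_c[X]$ has a root in $L_c$. By clearing denominators, choosing a common integer $N$ such that all coefficients of $P$ lie in the convergent Puiseux series with denominators dividing $N$, and substituting $t\mapsto t^N$ (which preserves convergence and commutes with extraction of $N$-th roots in $L_c$), I reduce at once to the case $P(X)\in\mathbb{C}\{t\}[X]$, where $\mathbb{C}\{t\}$ denotes the ring of convergent power series in $t$.

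The next step is to execute the Newton--Puiseux algorithm while tracking convergence. Since the formal Puiseux field $L$ is algebraically closed by the classical Newton--Puiseux theorem over $\mathbb{C}((t))$, $P$ has a root $\phi(t)\in L$. Write $\phi(t)=c\,t^{p/q}+(\text{higher terms})$; the exponent $p/q$ is read off the Newton polygon of $P$, and $c\in\mathbb{C}^{\times}$ is a root of the characteristic polynomial associated to the corresponding edge. Substituting $X=t^{p/q}(c+Y)$ and clearing the leading power of $t$ produces a new polynomial in $Y$ with coefficients in $\mathbb{C}\{t^{1/q}\}$, to which the procedure can be applied recursively.

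Convergence is then established stepwise. When $c$ is a simple root of the characteristic polynomial, the analytic Hensel lemma for $\mathbb{C}\{t^{1/q}\}$ (equivalently, Weierstrass preparation, or the analytic implicit function theorem) promotes the formal factorization to a convergent one, so the factor vanishing at $Y=0$ has a convergent root in $Y$. When $c$ is a multiple root, the Newton polygon procedure is repeated on the resulting factor; because the discriminant of the active factor has strictly smaller $t$-adic valuation at each recursion (or the degree drops), the process terminates after finitely many stages. The original root $\phi(t)$ is recovered as the composition of finitely many convergent substitutions, hence converges on a common punctured neighborhood of $0$ and belongs to $L_c$.

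The main obstacle is that the formal Newton--Puiseux algorithm produces only an element of $L$; convergence is not automatic. The essential analytic input is the repeated application of Hensel's lemma / Weierstrass preparation in $\mathbb{C}\{t^{1/q}\}$, which upgrades formal factorizations to convergent ones and is what distinguishes $L_c$ from $L$ in the argument.
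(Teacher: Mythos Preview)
The paper does not prove this statement; it is simply recorded as a \emph{Fact} without argument or reference, so there is nothing to compare your attempt against. Your Newton--Puiseux approach, with convergence tracked step by step via Weierstrass preparation / the analytic Hensel lemma in $\mathbb{C}\{t^{1/q}\}$, is the standard classical argument and is correct. The only point worth tightening is the termination bookkeeping: after reducing to the squarefree case and bounding denominators, each recursion step in which the characteristic root has multiplicity $m>1$ (so that all $m$ roots of the active factor agree to the current order) lowers the valuation of the discriminant of that factor by at least $m(m-1)$, hence a simple root must appear after finitely many steps and the analytic implicit function theorem then finishes the construction of a convergent root.
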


\section{Motivation: Quiver Varieties, \texorpdfstring{$QQ$}{QQ}-Systems, and Integrable Models}
\label{Motivation}
\subsection{Quiver Varieties and Infinite $QQ$-Systems}

The $QQ$-systems emerge naturally in the enumerative geometry of Nakajima quiver varieties, which serve as fundamental examples of symplectic resolutions (see, e.g., \cite{G}, \cite{Kaledinreview}, \cite{Kamnitzer:2022aa}).

A \textit{quiver} is a directed graph defined by a set of vertices $I$ and oriented edges. A \textit{framed quiver} extends this by doubling the vertex set: for each original vertex, a new \textit{framing vertex} is added, connected by an edge directed from the framing vertex to its original counterpart.

For a framed quiver, we associate:
\begin{itemize}
    \item Vector spaces $V_i$ to original vertices and $W_i$ (termed \textit{flavor spaces} in physics) to framing vertices, with dimensions $\mathbf{v}_i = \dim V_i$ and $\mathbf{w}_i = \dim W_i$. Morphisms between these spaces correspond to quiver edges, with the incidence matrix $Q_{ij}$ counting oriented edges from vertex $i$ to $j$.
    \item An affine representation space $M = \text{Rep}(\mathbf{v}, \mathbf{w})$, defined as:
    \[
    M = \bigoplus_{i \in I} \text{Hom}(W_i, V_i) \oplus \bigoplus_{i,j \in I} Q_{ij} \otimes \text{Hom}(V_i, V_j).
    \]
\end{itemize}

The space $M$ admits a natural action of the group $G_{\mathbf{v}} = \prod_{i \in I} GL(V_i)$. In gauge theory contexts requiring sufficient supersymmetry, we consider the cotangent space $T^*M$, where $G_{\mathbf{v}}$ acts via a Hamiltonian action with moment map $\mu: T^*M \to \mathfrak{g}_{\mathbf{v}}^*$. Define $L_{\mathbf{v}, \mathbf{w}} = \mu^{-1}(0)$, then \textit{Nakajima quiver variety} is then constructed as the symplectic reduction:
\[
X = N_{\mathbf{v}, \mathbf{w}} = L_{\mathbf{v}, \mathbf{w}} /\!\!/_{\theta} G_{\mathbf{v}} = L^{ss}_{\mathbf{v}, \mathbf{w}} / G_{\mathbf{v}},
\]
where $L^{ss}_{\mathbf{v}, \mathbf{w}}$ denotes the semi-stable locus, determined by a stability parameter $\theta \in \mathbb{Z}^I$ (see, e.g, \cite{G} for details).

Consider a type $A_n$ quiver as an example:
\begin{center}
\begin{tikzpicture}
    \draw [ultra thick] (0,0) -- (3,0);
    \draw [fill] (0,0) circle [radius=0.1];
    \draw [fill] (1,0) circle [radius=0.1];
    \draw [fill] (2,0) circle [radius=0.1];
    \draw [fill] (3,0) circle [radius=0.1];
    \node at (0,-0.3) {$\mathbf{v}_1$};
    \node at (1,-0.3) {$\mathbf{v}_2$};
    \node at (2,-0.3) {$\cdots$};
    \node at (3,-0.3) {$\mathbf{v}_{n-1}$};
    \draw [ultra thick] (0,0) -- (0,1);
    \draw [ultra thick] (1,0) -- (1,1);
    \draw [ultra thick] (2,0) -- (2,1);
    \draw [ultra thick] (3,0) -- (3,1);
    \fill [ultra thick] (-0.1,1) rectangle (0.1,1.2);
    \fill [ultra thick] (0.9,1) rectangle (1.1,1.2);
    \fill [ultra thick] (1.9,1) rectangle (2.1,1.2);
    \fill [ultra thick] (2.9,1) rectangle (3.1,1.2);
    \node at (0,1.45) {$\mathbf{w}_1$};
    \node at (1,1.45) {$\mathbf{w}_2$};
    \node at (2,1.45) {$\cdots$};
    \node at (3,1.45) {$\mathbf{w}_{n-1}$};
\end{tikzpicture}
\end{center}

The group $\prod_{i,j} GL(Q_{ij}) \times \prod_i GL(W_i) \times \mathbb{C}^\times_q$ acts as automorphisms on $X$, induced by its action on $\text{Rep}(\mathbf{v}, \mathbf{w})$. Here, $\mathbb{C}^\times_q$ scales the cotangent directions with weight $q$ and the symplectic form with weight $q^{-1}$. Let $\mathsf{T} = \mathbf{A} \times \mathbb{C}^\times_q$ be the maximal torus of this group.

As a symplectic resolution, $N_{\mathbf{v}, \mathbf{w}}$ admits a projective morphism to an affine variety (see Theorem 5.2.2 in \cite{G}):
\[
N_{\mathbf{v}, \mathbf{w}} \to N^0_{\mathbf{v}, \mathbf{w}} := \text{Spec}\big(\mathbb{C}[\mu^{-1}(0)]^G\big).
\]

A simple example arises from a quiver with one vertex framed by another, with vector spaces $V$ and $W$ of dimensions $\dim V = k$ and $\dim W = n$. Then:
\[
M = \text{Hom}(W, V), \quad N_{k,n} = T^* \text{Gr}_{k,n} = [\text{Hom}(V, W) \oplus \text{Hom}(W, V)] /\!\!/_{\theta} GL(V).
\]
The moment map is $\mu(A, B) = BA$, and stability requires $\{ (A, B) : \text{rank}(A) = k \}$.

This example generalizes to a type $A_n$ quiver with a single framing vertex:
\begin{center}
\begin{tikzpicture}
    \draw [ultra thick] (0,0) -- (3,0);
    \draw [ultra thick] (3,1) -- (3,0);
    \draw [fill] (0,0) circle [radius=0.1];
    \draw [fill] (1,0) circle [radius=0.1];
    \draw [fill] (2,0) circle [radius=0.1];
    \draw [fill] (3,0) circle [radius=0.1];
    \node at (0.1,-0.3) {$\mathbf{v}_1$};
    \node at (1.1,-0.3) {$\mathbf{v}_2$};
    \node at (2.1,-0.3) {$\cdots$};
    \node at (3.1,-0.3) {$\mathbf{v}_{n-1}$};
    \fill [ultra thick] (2.9,1) rectangle (3.1,1.2);
    \node at (3.1,1.45) {$\mathbf{w}_{n-1}$};
\end{tikzpicture}
\end{center}
Here, stability demands injective maps $V_i \to V_{i+1}$ and $V_{n-1} \to W_{n-1}$, with the sequence $\mathbf{v}_1, \dots, \mathbf{v}_{n-1}, \mathbf{w}_{n-1}$ being non-decreasing for the variety to be non-empty. This corresponds to the cotangent bundle of a partial flag variety.

On $N_{\mathbf{v}, \mathbf{w}}$, we define tautological bundles:
\[
\mathcal{V}_i = L^{ss}_{\mathbf{v}, \mathbf{w}} \times_G V_i, \quad \mathcal{W}_i = L^{ss}_{\mathbf{v}, \mathbf{w}} \times_G W_i.
\]
The bundles $\mathcal{W}_i$ are topologically trivial, and tensor polynomials in $\mathcal{V}_i$, $\mathcal{W}_i$, and their duals generate the K-theory ring $K_{\mathsf{T}}(X)$, following Kirwan surjectivity (\cite{MN}). For further details, see \cite{G}, \cite{MO} (introduction), or \cite{O} (Section 4).

In localized quantum K-theory $K^{loc}_{\mathsf{T}}(N)$, with a basis of torus $\mathsf{T}$ fixed points, the eigenvalues of multiplication operators given by tautological bundles correspond to isolated solutions of the infinite $QQ$-system. Specifically, the $Q^i_+(z)$-functions, solutions to the infinite $QQ$-system, are generating functions for the eigenvalues of exterior powers of $\mathcal{V}_i$, while $\Lambda_i(z)$ generate the eigenvalues of the exterior powers of $\mathcal{W}_i$.

For the example in case of $N_{k,n} = T^* \text{Gr}_{k,n}$, the operator:
\[
\mathcal{Q}_+(z) = \sum_{i=1}^k (-1)^i z^{k-i} \Lambda^i \mathcal{V}
\]
has eigenvalues $\prod_{r=1}^k (z - a_{i_r})$ for all $k$-subsets $\{a_{i_1}, \dots, a_{i_k}\}$ labeling the fixed points of $\mathsf{T}$, forming a basis of $K^{loc}_{\mathsf{T}}(N_{k,n})$. The operator generating exterior powers of the framing bundle, $\sum (-1)^i z^i \Lambda^{n-i} \mathcal{W}$, has eigenvalue $\Lambda(z) = \prod_{i=1}^n (z - a_i)$.

What about the polynomial $Q_-(z)$? Consider the short exact sequence of bundles:
\[
0 \to \mathcal{V} \to \mathcal{W} \to q \otimes \mathcal{V}^\vee \to 0,
\]
where $q$ denotes a trivial line bundle with character $q$. The quotient bundle corresponds to the tautological bundle of $T^* \text{Gr}_{n-k,n}$, the same GIT quotient with inverted stability parameter. The $\mathcal{Q}_-(z)$-operator is then the generating function for quantum exterior powers of $\mathcal{V}^\vee$. This extends to other quiver varieties, such as type $A_n$ (see, e.g., \cite{KZ1}).

\subsection{Motivation: $QQ$-Systems, Quantum K-Theory, and Integrable Models}

The localized equivariant cohomology and K-theory of Nakajima quiver varieties form representations of the Yangian $Y_{\hbar}(\mathfrak{g}^Q)$ and quantum affine algebra $U_q(\widehat{\mathfrak{g}}^Q)$  respectively, associated with the quiver. In the simply-laced $ADE$ case, the quiver corresponds bijectively to the corresponding Dynkin diagram (\cite{nakajima1998}, \cite{Nakajima:1999hilb}, \cite{Nakajima:2001qg}, \cite{olivier1998quantum}, \cite{Vasserot:wo}, \cite{varagnolo2000quiver}).

For the cotangent bundle to the Grassmannian $N_{k,n}={\rm T^*Gr}_{k,n}$, there is an isomorphism between $\bigoplus_{k=0}^n K_{\mathsf{T}}^{loc}(N_{k,n})$ and the representation space of $U_q(\widehat{\mathfrak{sl}}_2)$:
\[
\mathcal{H} = \mathbb{C}^2(a_1) \otimes \dots \otimes \mathbb{C}^2(a_n),
\]
constructed from two-dimensional evaluation modules, with $K_{\mathsf{T}}^{loc}(N_{k,n})$ as subspaces of weight $n-2k$.

The stable basis construction \cite{MO} provides an effective realization of quantum groups in $K_{\mathsf{T}}^{loc}(N)$, enabling the construction of $R$-matrices:
\[
R_{V_i(a_i), V_j(a_j)}: V_i(a_i) \otimes V_j(a_j) \to V_j(a_j) \otimes V_i(a_i),
\]
where $\{V_i(a_i)\}$ are finite-dimensional representations of $U_q(\widehat{\mathfrak{g}}^Q)$ twisted by $a_i$, satisfying the Yang-Baxter equation. This endows the category of representations with a braided tensor category structure, with all quantum group generators expressible via $R$-matrix elements \cite{FRT}.

This allows the construction of an integrable model of spin chain type, known as XXZ model.  
Define an element in the category of finite-dimensional representations of  $U_q(\widehat{\mathfrak{g}}^Q)$, which we call \textit{physical space}, e.g., see $\mathcal{H}$ above. For an auxiliary module $W(u)$ from the same category, which depends on evaluation (spectral) parameter $u$, the \textit{transfer matrix} is (see, e.g., \cite{R}, \cite{KBI}):
\[
T_{W(u)} = \text{Tr}_{W(u)} \Big[ P R_{\mathcal{H}, W(u)} (1 \otimes Z) \Big], \quad T_{W(u)}: \mathcal{H} \to \mathcal{H},
\]
where $Z = \prod_{i=1}^r \zeta_i^{\check{\alpha}_i} \in e^{\mathfrak{h}}$ is known as {\it twist} (with $\mathfrak{h}$ the Cartan subalgebra, $\{\check{\alpha}_i\}$ simple coroots), and $P$ is the permutation operator. The object $M^Z_{W(u)} = P R_{\mathcal{H}, W(u)} (1 \otimes Z)$ is known as the \textit{quantum monodromy matrix}. The Yang-Baxter equation ensures that transfer matrices commute, forming the Bethe algebra. Their eigenvalues, describing the integrable model, satisfy the Bethe equations, which are solutions to the $QQ$-system.

$R$-matrix is an object from $U_q(\mathfrak{b}_+)\otimes U_q(\mathfrak{b}_-)$, where $\mathfrak{b}_{\pm}$ are Borel subalgebras in $\widehat{\mathfrak{g}}^Q$. This allows us to consider transfer matrices 
with $W(u)$ with representations of $U_q(\mathfrak{b}_+)$.
The $Q^i_{\pm}(z)$-functions, solutions to the $QQ$-system, arise from transfer matrices with prefundamental representations \cites{HJ, FH, FH2} of $U_q(\mathfrak{b}_+)$, where $\mathfrak{b}_\pm$ are Borel subalgebras. The operators $\mathcal{Q}^i_{\pm}(z)$ have eigenvalues $Q^i_{\pm}(z)$, satisfying $QQ$-system relations in the resulting extended Grothendieck ring.

Geometrically, $\mathcal{Q}^i_{\pm}(z)$ relate to enumerative geometry via quantum tautological classes, defined by counting \textit{quasimaps} from $\mathbb{P}^1$ to $N_{\mathbf{v}, \mathbf{w}}$. 
 The notion of quasimap (see, e.g., \cite{CKM}, \cite{O}) is deeply related to the structure of Nakajima variety as a GIT quotient of affine space and instead of maps from $\mathbb{P}^1$ to $N_{\mathbf{v}, \mathbf{w}}$, quasimaps corresponds to certain bundles and their sections with certain conditions over $\mathbb{P}^1$. 
The parameters of the deformation are known as K\"ahler parameters and the counting of quasimaps is encapsulated in the so-called vertex functions \cite{O}. They are governed \cite{O}, \cite{OS} by the difference equations well-known in the theory of representations of  quantum groups, the so-called Frenkel-Reshetikhin or quantum Knizhnik-Zamolodchikov equation \cite{FR}, where the difference parameter is given by the character of the natural multiplicative 
$\mathbb{C}^{\times}_{\bf q}$ action on $\mathbb{P}^1$. Its asymptotics (upon the limit ${\bf q}\rightarrow$ 1)  gives the eigenvalue problem for the transfer matrices, so that the K\"ahler parameters identify with the $\hat{\xi}^{-1}_i$-parameters. 

This way one can interpret $\mathcal{Q}_+^i(z)$ as operators generating exterior powers of quantum tautological bundles \cite{PSZ}, \cite{KPSZ}, while the coefficients of the twist $Z$ serve as K\"ahler parameters. The operator $\mathcal{Q}_-^i(z)$ 
also generates operators of quantum multiplication by exterior powers of quantum tautological bundles, but for the Nakajima variety with the action of simple Weyl reflection on stability parameter, and thus for K\"ahler parameter as well. In the particular example of $N_{k,n}={\rm T^*Gr}_{k,n}$, the  $\mathcal{Q}_-(z)$-operator will generate operators of quantum multiplication by exterior powers of the quantum tautological bundle on ${\rm T^*Gr}_{n-k,n}$ with the inverse K\"ahler parameter.

Altogether this provides a natural motivation for the study of the deformation of isolated solutions of the infinite $QQ$-systems as 
transitions from classical to quantum tautological class eigenvalues.

Upon certain nondegeneracy conditions one can show that such eigenvalues of $\mathcal{Q}_+^i(z)$-operators turn out to be not Puiseux series, but just power series in K\"ahler parameters. To proceed with that, it is enough to consider the monodromy around the unit disk and take into account that we have to produce the same system of eigenvalues and eigenvectors, which leads to the fact that all multivalued terms vanish. This argument, of course, relies heavily on the condition of having distinct eigenvalues.
\section{Main results}\label{mainresults}
\subsection{Deformation of solutions of infinite $qq$-systems}\label{statementfordeformationofqq-systems}
We first state our result for a simple Lie algebra $\mathfrak{g}$, and then explicitly state the result in the $\mathfrak{sl}_2$ case where we have more explicit results than in the general case.

In the notation of Remark \ref{qqfinitetoinfinite} set $t_{i}:=\xi_{i}^{-1}$.
\begin{theorem}\label{qqforgeneralg}
    Let $\{q_{\pm}^{i}(z)\}_{i=1}^{r}$ be an isolated solution of an infinite $qq$-system \eqref{infiniteqqsystem}. Then there exist $\epsilon>0$ and polynomials $\{q_{\pm}^{i,Z^{H}}(z)\}_{i=1}^{r}$ that lift $\{q_{\pm}^{i}(z)\}_{i=1}^{r}$ and solve the system \eqref{qqsystem} for all $Z^{H}\in\mathfrak{h}$ satisfying $|t_{i}|<\epsilon$ for all $i$.
\end{theorem}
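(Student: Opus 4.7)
My plan is to recast the claim as producing an analytic section, over a small polydisk in $\mathbb{A}^r$, of the projection $\pi:\mathcal{V}\to\mathbb{A}^r$ from the variety $\mathcal{V}$ of solutions of the finite $qq$-system to its parameter space, passing through the given isolated solution in the fiber over $t=0$. The key tool is the algebraic closedness of the field $L_c$ of convergent Puiseux series recalled in Section~\ref{basicsonpuiseuxseries}, together with a tropicalization argument near the isolated point.

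First, following Remark~\ref{qqfinitetoinfinite}, I rescale $\tilde q_{-}^{i}=\xi_i q_{-}^{i}$ and write $t_i=\xi_i^{-1}$, turning \eqref{qqsystem} into
\begin{equation*}
t_i\,W\!\bigl(q_{+}^{i},\tilde q_{-}^{i}\bigr)(z)+q_{+}^{i}(z)\tilde q_{-}^{i}(z)=\Lambda_i(z)\prod_{j\neq i}\bigl[q_{+}^{j}(z)\bigr]^{-a_{ji}},\qquad i=1,\ldots,r,
\end{equation*}
which specializes to \eqref{infiniteqqsystem} at $t=0$. Since the $\Lambda_i$'s are fixed, the monic polynomials $q_{+}^{i}$ and $\tilde q_{-}^{i}$ have bounded degrees, so equating like powers of $z$ in the equations turns the rescaled system into a finite polynomial system $F(x,t)=0$ in the coefficient vector $x\in\mathbb{A}^N$ and parameters $t=(t_1,\ldots,t_r)\in\mathbb{A}^r$. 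Let $\mathcal{V}=V(F)\subset\mathbb{A}^N\times\mathbb{A}^r$, let $\pi$ be the projection to $t$-space, and let $p_0=(x_0,0)$ be the point corresponding to the given isolated solution. The theorem amounts to constructing a holomorphic map $t\mapsto x(t)$ on some polydisk $\{|t_i|<\epsilon\}$ with $x(0)=x_0$ and $(x(t),t)\in\mathcal{V}$.

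To do so, I would restrict to the union $\mathcal{V}_0\subset\mathcal{V}$ of irreducible components through $p_0$ and argue that isolation of $p_0$ in $\pi^{-1}(0)$, together with upper semicontinuity of fibre dimension, forces $\dim\mathcal{V}_0=r$ and $\pi|_{\mathcal{V}_0}$ to be dominant (in fact quasi-finite) near $p_0$. Passing to the field $L_c$ of convergent Puiseux series in the $t_i$, the fundamental theorem of tropical geometry applied to the tropicalization of $\mathcal{V}_0$ at $p_0$, combined with the algebraic closedness of $L_c$, should produce an $L_c$-point of $\mathcal{V}_0$ whose specialization at $t=0$ is $p_0$. Its coordinates are then convergent Puiseux series in the $t_i$ by the very definition of $L_c$, and they give the coefficients of the sought polynomials $q_{\pm}^{i,Z^H}(z)$ on some common polydisk $|t_i|<\epsilon$; lifting the infinite solution and solving \eqref{qqsystem} then hold by construction.

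The step I expect to be the main obstacle is upgrading the $L_c$-point of $\mathcal{V}_0$ to an honest analytic section over a polydisk in the $r$ \emph{independent} parameters $t_i$: standard Puiseux/tropical existence arguments are naturally tailored to curves in parameter space, and to obtain a multi-parameter deformation one must rule out ramification of $\pi|_{\mathcal{V}_0}$ at $p_0$ in all directions, or else identify a canonical single-valued branch. When the Jacobian $\partial_x F(x_0,0)$ is nondegenerate this is the implicit function theorem, but the content of the theorem is precisely that the weaker hypothesis of isolation suffices. Overcoming this will require a careful analysis of the local tropical structure of $\mathcal{V}_0$ at $p_0$, exploiting the monic-normalization of the $q_{+}^{j}$'s and the specific polynomial shape of the $qq$-system to certify that the Puiseux series exponents are integral in each $t_i$ separately.
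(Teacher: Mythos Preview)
Your plan overshoots the target. Theorem~\ref{qqforgeneralg} only asks that for each $t$ in a small polydisk there exist a solution close to the given one; it does not claim that these solutions assemble into an analytic or Puiseux-series section. That stronger conclusion is Theorem~\ref{analyticityqq}, which carries the additional hypothesis $q_{\pm}^i(0)\neq 0$ precisely so that $p_0$ lies in the torus and tropical methods apply. Without that hypothesis your step ``pass to $L_c$ and tropicalize'' is unavailable, and in any case the $L_c$ of Section~\ref{basicsonpuiseuxseries} is a one-variable field, so the multi-parameter lift needs a further device (the paper uses Hahn series over $\Gamma=\alpha_1\mathbb{Q}+\cdots+\alpha_r\mathbb{Q}$ with $\mathbb{Q}$-independent $\alpha_i$ in the proof of Theorem~\ref{analyticityqq}). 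The obstacle you flag is therefore real, but it belongs to a different theorem.

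For Theorem~\ref{qqforgeneralg} itself the paper uses no tropical input. It introduces slack variables $s_1,\ldots,s_{N-l}$ (with $l$ the Jacobian rank at $p_0$) so that the enlarged system has full-rank Jacobian, applies the implicit function theorem to produce a component $Y$ of dimension $\geq N-l+r$ through the point, cuts back by $\{s_i=0\}$ via the affine dimension theorem (Fact~\ref{dimensiontheorem}) to an irreducible $C\ni p_0$ with $\dim C\geq r$, and then uses isolation of $p_0$ in $pr^{-1}(0)$ together with upper semicontinuity of fibre dimension to conclude that $pr(C)\subset\mathbb{C}^r$ contains an open neighbourhood of $0$. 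Your dimension-theoretic sketch is close to this, but note that isolation plus semicontinuity alone only bound $\dim\mathcal{V}_0$ from above; the lower bound $\dim\mathcal{V}_0\geq r$ needs a separate input (Krull's height theorem applied to $N$ equations in $\mathbb{A}^{N+r}$, or equivalently the paper's slack-variable construction). With that supplied, your first two paragraphs already prove Theorem~\ref{qqforgeneralg}, and the tropical step---along with the obstacle you identified---can be dropped entirely.
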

We will give the proof of the above Theorem in the case of $\mathfrak{sl}_2$ and sketch the proof in the general case, which is similar to the proof in the $\mathfrak{sl}_2$-case.

Since Theorem \ref{qqforgeneralg} does not require $\Lambda_{i}(z)$'s to be separable, we obtain the following using the same arguments as in \cite{BSZ}:
\begin{corollary}
    Let $\mathfrak{g}$ be simply-laced and $Z^{H}\in\mathfrak{h}$ be regular semisimple. Let $\Lambda_{1}(z),\ldots,\Lambda_{r}(z)$ be arbitrary monic polynomials. Assume that $\big(\{\deg(\Lambda_{i})\}_{i=1}^{r},\{\deg(q^{i,Z^{H}}_{+})\}_{i=1}^{r},\varnothing\big)$ is an admissible combinatorial data. Then there is a one-to-one correspondence between the nondegenerate $Z^H$-twisted Miura opers and the solutions of the Bethe Ansatz equations \eqref{betheqqg}.
\end{corollary}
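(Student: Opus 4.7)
The plan is to recycle the strategy of \cite{BSZ}*{Section 6} essentially verbatim, substituting Theorem \ref{qqforgeneralg} for the one ingredient in that argument that required separability. Recall that the \cite{BSZ} argument proceeds as follows: starting from a nondegenerate $Z^H$-twisted Miura-Pl\"{u}cker oper with admissible combinatorics, one reads off a tuple of monic polynomials satisfying the finite $qq$-system; to upgrade the Miura-Pl\"{u}cker structure to a genuine Miura oper, one constructs a full tower of Backl\"{u}nd transformations of this polynomial solution. The existence of such a tower is in turn reduced to the ability to deform an isolated solution of the corresponding infinite $qq$-system into a family of solutions of the finite $qq$-system as the twist is turned on.

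In \cite{BSZ} this deformation step is precisely where the separability hypothesis on the $\Lambda_i(z)$ enters. Since Theorem \ref{qqforgeneralg} provides such deformations for arbitrary monic $\Lambda_i(z)$ and any isolated solution of the infinite $qq$-system, I would simply splice it into the corresponding step of the \cite{BSZ} proof. Concretely, the steps are: (i) use admissibility of the combinatorial data $\bigl(\{\deg\Lambda_i\}_{i=1}^{r},\{\deg q^{i,Z^{H}}_{+}\}_{i=1}^{r},\varnothing\bigr)$ to identify an underlying isolated solution of the infinite $qq$-system with the prescribed degrees; (ii) apply Theorem \ref{qqforgeneralg} to this solution to obtain the desired deformation; (iii) feed the deformation into the Backl\"{u}nd-transform construction of \cite{BSZ} to conclude that the given nondegenerate Miura-Pl\"{u}cker oper is in fact a Miura oper; (iv) invoke the correspondence of \cite{BSZ} between nondegenerate $Z^H$-twisted Miura opers and solutions of the Bethe Ansatz equations \eqref{betheqqg} to finish the bijection.

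The main concern is purely one of verification: one must confirm that no other step in \cite{BSZ}*{Section 6} tacitly uses the separability hypothesis beyond the deformation lemma that Theorem \ref{qqforgeneralg} now supplies in full generality. Because separability is flagged in \cite{BSZ} explicitly at this single point, I expect this bookkeeping to be routine, and so the corollary follows without any genuinely new input beyond Theorem \ref{qqforgeneralg}.
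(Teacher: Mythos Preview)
Your proposal is correct and matches the paper's own proof essentially verbatim: the paper notes that admissibility of the data forces the $\xi_i\to\infty$ limit of $\{q_{\pm}^{i,Z^{H}}(z)\}$ to be an isolated solution of the infinite $qq$-system, and then defers entirely to \cite{BSZ} with Theorem \ref{qqforgeneralg} supplying the deformation step that previously required separability. Your write-up is in fact more detailed than the paper's, which compresses your steps (ii)--(iv) into the single phrase ``the rest of the proof is the same as in \cite{BSZ}.''
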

\begin{proof}
    It is enough to note that the admissibility of the data $\big(\{\deg(\Lambda_{i})\}_{i=1}^{r},\{\deg(q^{i,Z^{H}}_{+})\}_{i=1}^{r},\varnothing\big)$ implies that the limit of $\{q_{\pm}^{i,Z^{H}}(z)\}_{i=1}^{r}$ as $\langle\alpha_{i},Z^{H}\rangle\to \infty$ for $1\leq i\leq r$, is an isolated solution of the corresponding infinite $qq$-system \eqref{infiniteqqsystem}. The rest of the proof is the same as in \cite{BSZ}.
\end{proof}
Using tropical geometry in the case where the roots of $q_{\pm}^{i}(z)$'s satisfy a generic condition, we get the following stronger version of Theorem \ref{qqforgeneralg}.
\begin{theorem}\label{analyticityqq}
    Let $\{q_{\pm}^{i}(z)\}_{i=1}^{r}$ be an isolated solution of an infinite $qq$-system \eqref{infiniteqqsystem} such that $q_{\pm}^{i}(0)\neq 0$ for all $i$. Then there exist $\epsilon>0$ and polynomials $\{q_{\pm}^{i,Z^{H}}(z)\}_{i=1}^{r}$ that lift $\{q_{\pm}^{i}(z)\}_{i=1}^{r}$ and solve the system \eqref{qqsystem} for all $Z^{H}\in\mathfrak{h}$ satisfying $|t_{i}|<\epsilon$ for all $i$. Moreover, the polynomials $\{q_{\pm}^{i,Z^{H}}(z)\}_{i=1}^{r}$ are Puiseux series in $t_{1},\ldots, t_{r}$ and therefore depend analytically on $t_{1},\ldots,t_{r}$.
\end{theorem}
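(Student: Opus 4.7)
My plan is to recast the $qq$-system \eqref{qqsystem} as a polynomial system in the coefficients of the unknown polynomials with the parameters $t_1,\ldots,t_r$ entering polynomially, and then invoke tropical algebraic geometry over the algebraically closed field $L_c$ of convergent Puiseux series to extract an \emph{analytic} lift of the given isolated solution.

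First, I would fix the degrees $d_\pm^i:=\deg q_\pm^i$ and parameterize each unknown by $q_\pm^{i,Z^H}(z)=z^{d_\pm^i}+\sum_{j=0}^{d_\pm^i-1} x_{\pm,j}^i z^j$ with indeterminate coefficients $x_{\pm,j}^i$. Substituting into \eqref{qqsystem}, expanding the Wronskians, and matching coefficients of powers of $z$ produces a polynomial system $F_\alpha(x;t_1,\ldots,t_r)=0$ over $\mathbb{C}[t_1,\ldots,t_r]$ whose specialization at $t=0$ is exactly the system for \eqref{infiniteqqsystem}, with $x^0:=(\text{coefficients of the given } q_\pm^i)$ as an isolated zero. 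Along any chosen ray $t_i=c_i s^{a_i}$ with $a_i\in\mathbb{Q}_{>0}$ and $c_i\in\mathbb{C}^\times$, this becomes a polynomial system over $L_c$, and the lift to find becomes an element $\hat{x}(s)\in L_c^{\dim x}$ specializing at $s=0$ to $x^0$.

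Second, I would apply the fundamental theorem of tropical geometry to the ideal $I=(F_\alpha)\subset L_c[x]$: since $L_c$ is algebraically closed with residue field $\mathbb{C}$, every component of $V(I)$ projects to a cell of $\mathrm{trop}\, V(I)$, and an isolated solution at $s=0$ corresponds to a vertex that lifts uniquely to a branch over $L_c$. The hypothesis $q_\pm^i(0)\neq 0$ is what ensures that the coordinates of this vertex are non-negative, so the lift has no polar part in $s$. Indeed, $q_\pm^i(0)\neq 0$ forces $x_{\pm,0}^i\neq 0$ at $t=0$, hence $\nu(\hat x_{\pm,0}^i)=0$; the defining relations $q_+^i(z)q_-^i(z)=\Lambda_i(z)\prod_{j\neq i}[q_+^j(z)]^{-a_{ji}}$ then allow an induction on $j$ that propagates $\nu(\hat x_{\pm,j}^i)\geq 0$ to all the coefficients. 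Convergence of the resulting series comes for free from working inside $L_c$ rather than the merely formal Puiseux field.

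Finally, to assemble these one-parameter analytic lifts into a genuine multivariate convergent Puiseux series in $(t_1,\ldots,t_r)$, I would vary the ray $(a_1,\ldots,a_r)$ and glue using the uniqueness of the lift at the isolated tropical vertex, together with a Gr\"obner-fan type argument on $\mathrm{trop}\, V(I)$. The hardest part will be precisely this multivariate gluing: controlling the initial ideals uniformly as the direction varies, so that the one-parameter Puiseux expansions agree on overlaps and define a single convergent Puiseux family in all the $t_i$ near $0$, as opposed to merely in each variable separately. The condition $q_\pm^i(0)\neq 0$ should again be decisive in ruling out the direction-dependent degenerations that would otherwise obstruct this gluing.
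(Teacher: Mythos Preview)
Your overall strategy---recast the $qq$-system as a polynomial system in the unknowns, then apply the fundamental theorem of tropical geometry over a suitable algebraically closed valued field---is the same as the paper's. The genuine gap is in how you pass from one parameter to $r$ parameters. You propose to work along individual rays $t_i=c_i s^{a_i}$, obtain one-parameter Puiseux lifts in $s$, and then ``glue'' these into a multivariate convergent Puiseux series by invoking uniqueness at an isolated tropical vertex and a Gr\"obner-fan argument. This gluing is not rigorous as stated: a tropical vertex need not have a unique algebraic preimage, and even granting uniqueness along each ray there is no mechanism in what you have written for assembling uncountably many one-parameter Puiseux expansions (each in its own auxiliary variable $s$) into a single multivariate series in $t_1,\ldots,t_r$. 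You correctly flag this as ``the hardest part,'' but it is in fact the entire content of the theorem beyond Theorem~\ref{qqforgeneralg}.

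The paper avoids gluing altogether. It chooses $\alpha_1,\ldots,\alpha_r\in\mathbb{R}_{>0}$ linearly independent over $\mathbb{Q}$, sets $\Gamma=\alpha_1\mathbb{Q}+\cdots+\alpha_r\mathbb{Q}$, and embeds the multivariate Puiseux ring into the Hahn series field $\mathbb{C}[[t^\Gamma]]$ via $t_i\mapsto t^{\alpha_i}$. Since $\Gamma$ is divisible, $\mathbb{C}[[t^\Gamma]]$ is algebraically closed, so the fundamental theorem applies directly in this \emph{single} valued field. The argument of Proposition~\ref{limitingargument}---which itself rests on the geometric existence proof of Theorem~\ref{qqforgeneralg} (implicit function theorem plus the affine dimension theorem), not on tropical geometry alone---shows $in_{\bm 0}(I)\neq\langle 1\rangle$, yielding a Hahn-series solution of valuation $0$. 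A theorem of Aroca--Ilardi on $\omega$-positive Puiseux series then certifies that this Hahn series actually lies in the multivariate Puiseux ring with bounded denominators. Two further remarks: the paper parameterizes by the \emph{roots} of $q_\pm^i$ rather than their coefficients, so that $q_\pm^i(0)\neq 0$ lands the problem squarely in the torus where tropical methods apply cleanly; and your proposal never invokes the non-tropical existence argument of Theorem~\ref{qqforgeneralg}, which the paper uses essentially to get $in_{\bm 0}(I)\neq\langle 1\rangle$.
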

To state the next result and for later use,
let us explicitly formulate Theorem \ref{qqforgeneralg} for $\mathfrak{sl}_2$.
Let $q_{+}^{t}(z)=\prod_{i=1}^{m}(z+x_{i}(t))$, $q_{-}^{t}(z)=\prod_{j=1}^{n}(z+y_{j}(t))$ and $\Lambda(z)=\prod_{k=1}^{l}(z+a_{k})^{m_{k}}$, where $t$ is a formal parameter, the $a_{k}\in\mathbb{C}$, $k=1,\ldots,l$, $a_{k}$'s are distinct, and $\sum_{k=1}^{l}m_{k}=m+n$. Note that $\Lambda(z)$ does not depend on the parameter $t$. Substituting $q_{+}^{t}(z)$, $q_{-}^{t}(z)$  and $\Lambda(z)$ in the equation of the finite $qq$-system \eqref{sl2qq} with parameter $t$, we get the following set of equations upon comparing the coefficients of $z^{m+n-k}$, $k=1,\ldots,m+n$:
\[
f_{k}:=e_{k}(x_{1},\ldots,x_{m},y_{1},\ldots,y_{n})+tp_{k-1}(x_{1},\ldots,x_{m},y_{1},\ldots,y_{n})-d_{k}=0,
\]
where $e_{k}(x_{1},\ldots,x_{m},y_{1},\ldots,y_{n})$ is the elementary symmetric polynomial of degree $k$, $p_{k-1}$ is a polynomial in $x_{1},\ldots,x_{m},y_{1},\ldots,y_{n}$ with integer coefficients of degree $k-1$ and $d_{k}$ is the coefficient of $z^{m+n-k}$ in $\Lambda(z)$.

When $\mathfrak{g}=\mathfrak{sl}_{2}$, we have the following stronger statement.
\begin{theorem}\label{sl2analyticity}
    Assume that $\Lambda(0)\neq 0$. For every given choice $(x_{1}(0),$ $\ldots,x_{m}(0),y_{1}(0),\ldots,y_{n}(0))$ of a solution of an infinite $qq$-system \eqref{sl2infiniteqq}, there exists $\epsilon>0$ and analytic functions $x_{1}(t),\ldots,x_{m}(t),y_{1}(t),$ $\ldots,y_{n}(t)$ that solve the finite $qq$-system \eqref{sl2qq} with parameter $t$ for all $t\in\mathbb{C}$ such that $|t|<\epsilon$. Moreover, in this way we get all the solutions of the system \eqref{sl2qq} (and the corresponding Bethe equations \eqref{bethesl2}) for small enough $t$.
\end{theorem}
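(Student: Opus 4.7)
My plan is to combine the Puiseux lifting supplied by Theorem \ref{analyticityqq} with a B\'ezout count for the explicit polynomial system $\{f_k = e_k + t p_{k-1} - d_k\}_{k=1}^{m+n}$ displayed just before the statement.

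For the existence half I would specialize Theorem \ref{analyticityqq} to $\mathfrak{g}=\mathfrak{sl}_2$. Here the requirement $q_{\pm}^i(0)\neq 0$ reduces to $q_+(0)q_-(0)\neq 0$, which is exactly $\Lambda(0)\neq 0$ by \eqref{sl2infiniteqq} evaluated at $z=0$. So every solution of \eqref{sl2infiniteqq} lifts to a convergent Puiseux solution of \eqref{sl2qq} over $L_c$. To promote this Puiseux lift to an ordinary single-valued analytic function of $t$, I would use the tropical/Newton-polygon machinery of the later sections on the system $\{f_k\}$ shifted to the chosen infinite solution. Since all coordinates $x_i(0), y_j(0)$ are nonzero, I expect to find that the only tropical cone supporting a Puiseux lift that reduces to the given solution at $t=0$ is the one with all $t$-valuations in $\mathbb{Z}_{\geq 0}$; this pins the Puiseux exponents to nonnegative integers and produces genuine convergent power-series lifts.

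For the completeness statement I would argue by a B\'ezout count in affine space. The polynomial $f_k$ has total degree $k$ in the $m+n$ unknowns, so the projective B\'ezout bound for $\{f_1,\ldots,f_{m+n}\}$ is $\prod_{k=1}^{m+n}k=(m+n)!$. The top-degree homogeneous part of $f_k$ is the elementary symmetric polynomial $e_k$, and the common projective zero set of $\{e_1,\ldots,e_{m+n}\}$ in $\mathbb{P}^{m+n-1}$ is empty, since any such zero would have all symmetric functions of its coordinates vanishing and hence all coordinates zero. So no solution of the finite system escapes to infinity for $t$ in a neighborhood of $0$, and the affine solution scheme of \eqref{sl2qq} has length exactly $(m+n)!$ for all such $t$. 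Since $\{f_k\}$ forms a regular sequence, the total family over $\operatorname{Spec}\mathbb{C}[t]$ is a flat complete intersection; hence each isolated solution of \eqref{sl2infiniteqq} of multiplicity $\mu$ contributes exactly $\mu$ analytic branches via the previous step, and these together exhaust all solutions of \eqref{sl2qq} for small $|t|$, and correspondingly all solutions of the associated Bethe equations \eqref{bethesl2}.

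The main obstacle I expect is the analyticity upgrade from Puiseux to ordinary power series: it is only the hypothesis $\Lambda(0)\neq 0$ that prevents the tropicalization of $\{f_k\}$ from producing lifts with genuinely fractional exponents in $t$, and handling this tropical computation carefully is the crucial step that separates this $\mathfrak{sl}_2$ statement from the general Theorem \ref{qqforgeneralg}.
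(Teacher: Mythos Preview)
Your plan has a structural circularity: in this paper, Theorem~\ref{analyticityqq} is proved \emph{after} Theorem~\ref{sl2analyticity}, and its proof explicitly invokes ``the argument of Proposition~\ref{limitingargument}'', which is the core of the $\mathfrak{sl}_2$ proof. So citing Theorem~\ref{analyticityqq} for the existence half defers exactly the work you are supposed to do. The paper's direct argument is: compute the Jacobian of $(f_1,\ldots,f_{m+n})$ at the chosen $t=0$ solution and show it has rank equal to the number $l$ of distinct coordinates (Lemma~\ref{rank}); pad with $m+n-l$ dummy linear variables $s_i$ to make the Jacobian invertible and apply the implicit function theorem; intersect the resulting analytic sheet with $\{s_1=\cdots=s_{m+n-l}=0\}$ and use the affine dimension theorem (Fact~\ref{dimensiontheorem}) to extract a curve $C$ of dimension $\ge 1$ through the initial point; show the projection of $C$ to the $t$-axis is open near $0$ (Lemma~\ref{path}); finally use this curve together with Fact~\ref{fact1} to prove $in_{\bm 0}(I)\ne\langle 1\rangle$, whence the fundamental theorem over $L_c$ produces a convergent Puiseux lift (Proposition~\ref{limitingargument}). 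The hypothesis $\Lambda(0)\neq 0$ enters only to guarantee that the limit point sits in the torus, and the paper is content with a convergent Puiseux series as the meaning of ``analytic''---so your concern about upgrading Puiseux exponents to integers is not actually the crux here.

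Your B\'ezout argument for the ``Moreover'' clause, on the other hand, is a genuine and clean addition that the paper does not spell out: since the top homogeneous part of $f_k$ is $e_k$ and $e_1,\ldots,e_{m+n}$ have no common projective zero, no solutions escape to infinity, the family over $\mathbb{C}[t]$ is a flat complete intersection of length $(m+n)!$, and every solution for small $t$ specializes to some $t=0$ solution. One caveat: flatness only says the total multiplicity near a given $t=0$ point is preserved, not that a multiplicity-$\mu$ point yields $\mu$ \emph{distinct} analytic branches; but for the surjectivity claim (``we get all solutions'') the specialization direction is all you need.
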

The evidence in support of the existence of a solution for theorem \ref{sl2analyticity} comes from our computations on Gfan \cite{gfan} (a software for doing calculations in tropical geometry) in the cases when $m$ and $n$ are small. In the general case, the idea is to use the implicit function theorem to find the deformations and use the fundamental theorem of tropical algebraic geometry (see \cite{MS}) to show their analyticity. To the best of our knowledge, tropical geometry has not been used in the study of $qq$-systems before. 

Recall the field $L_{c}$ of convergent Puiseux series from Section \ref{basicsonpuiseuxseries} and assume $\Lambda(0)\neq 0$. Consider the variety $\Def_{qq}:=V\big(\{f_{1},\ldots,f_{m+n}\}\big)\subset(L_{c}^{\times})^{m+n}$. Based on the computations on Gfan in the small degree cases, we have the following theorem.
\begin{theorem}\label{tropicalqq}
     Assume that all the coefficients $d_{i}$'s of $\Lambda(z)$ are non-zero. Then set of polynomials $f_{1},\ldots,f_{m+n}$  forms a tropical basis of the tropical variety $\trop(\Def_{qq})$ (see definition \ref{definitionoftropicalvariety}) and $\trop(\Def_{qq})=\{(0,\ldots,0)\}$. 
\end{theorem}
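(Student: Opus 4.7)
The plan is to prove the two assertions of the theorem in sequence: first the set-theoretic equality $\trop(\Def_{qq}) = \{(0,\ldots,0)\}$, and then the tropical basis property. Since the inclusion $\trop(\Def_{qq}) \subseteq \bigcap_k \trop(V(f_k))$ always holds, the tropical basis property is equivalent to the reverse inclusion $\bigcap_k \trop(V(f_k)) \subseteq \{(0,\ldots,0)\}$, which I would prove directly.

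For the set-theoretic claim I would use the Newton polygon. Any $L_c$-point of $\Def_{qq}$ gives values $x_i, y_j \in L_c^\times$ satisfying $e_k(x,y) = d_k - t\,p_{k-1}(x,y)$ from $f_k = 0$. Since each $d_k \in \mathbb{C}^\times$ has valuation $0$ and $t\,p_{k-1}$ has valuation $\geq 1$, we have $\nu(e_k(x,y)) = 0$ for every $k$. Thus the polynomial $P(z) = \prod_i(z+x_i)\prod_j(z+y_j) = z^{m+n} + \sum_k e_k(x,y) z^{m+n-k}$ has all coefficients of valuation $\geq 0$ with its leading and constant terms of valuation exactly $0$. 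The lower boundary of the Newton polygon is then the horizontal segment from $(0,0)$ to $(m+n,0)$, so by the Newton polygon theorem every root has valuation $0$, forcing $\nu(x_i) = \nu(y_j) = 0$. Combined with the standard implicit-function lift of the isolated solution of the infinite $qq$-system (whose coordinates lie in $\mathbb{C}^\times$ because $\Lambda(0) = d_{m+n} \neq 0$), this yields $\trop(\Def_{qq}) = \{(0,\ldots,0)\}$.

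For the tropical basis inclusion I would take $w = (v_1,\ldots,v_{m+n}) \in \bigcap_k \trop(V(f_k))$ and show $w = 0$. Writing
\[
\trop(f_k)(w) = \min\!\Bigl(\min_{|S|=k}\sum_{i\in S} v_i,\ 1 + \trop(p_{k-1})(w),\ 0\Bigr),
\]
the tropical vanishing condition requires this minimum to be attained at least twice. The case $v_{(1)} := \min_i v_i > 0$ is immediately dispatched by $f_1$: the minimum is then $0$, attained only by the constant term $-d_1$. The case $v_{(1)} = 0$ with some $v_{(l+1)} > 0$ is dispatched similarly by $f_{l+1}$ where $l$ is the number of zero entries, since then $E_{l+1}(w) = v_{(l+1)} > 0$ and $1 + \trop(p_l)(w) \geq 1$, leaving only the constant at the minimum $0$. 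The case $v_{(1)} < 0$ is handled by propagating ties: $f_1$ forces $v_{(1)} = v_{(2)}$ (no other monomial of $\trop(f_1)$ can match the strictly negative value $v_{(1)}$), then $f_2$ together with the explicit form $p_1 = (n-m+1)e_1(x) + (n-m-1)e_1(y)$ forces $v_{(3)} = v_{(1)}$, and the induction continues through $f_3, f_4, \ldots$ until $f_{m+n}$ produces a lone $E_{m+n}$-monomial with no tropical partner (as $1 + \trop(p_{m+n-1})(w) = 1 + (m+n-1)v_{(1)} > (m+n)v_{(1)}$), a contradiction.

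The main obstacle is the inductive propagation in the last case. The polynomials $p_{k-1}$ arise from the coefficients of the Wronskian $W(q_+,q_-)(z)$, and their monomial support degenerates at special values of $n-m$: for example $p_0 = n-m$ vanishes when $m=n$, and the two summands of $p_1$ drop out when $n = m\pm 1$, removing certain expected terms from $\trop(p_{k-1})(w)$. The case analysis must certify that in each such degenerate configuration an $f_k$ with uniquely attained tropical minimum can still be produced, and it is precisely these degenerate subcases that were probed in the Gfan computations cited earlier in the paper.
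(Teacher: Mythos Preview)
Your outline matches the paper's proof in structure and is correct. The Newton-polygon paragraph is actually redundant: once you show $\bigcap_k \trop(V(f_k)) = \{\bm 0\}$, both assertions of the theorem follow immediately, since $\trop(\Def_{qq}) \subseteq \bigcap_k \trop(V(f_k))$ automatically and $\bm 0 \in \trop(\Def_{qq})$ comes from Theorem~\ref{analyticityqq}, exactly as you note. Your treatment of the case $v_{(1)}=0$ by jumping straight to $f_{l+1}$ is in fact cleaner than the paper's step-by-step iteration through $f_1,f_2,\ldots$.

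The ``main obstacle'' you flag for $v_{(1)}<0$ is illusory, and the paper's proof dispatches it uniformly without any case distinction on $m-n$. The point is the same bound you yourself invoked at the final step: since $p_{k-1}$ is homogeneous of degree $k-1$ in the roots (or identically zero), one has
\[
1+\trop(p_{k-1})(w)\;\geq\;1+(k-1)v_{(1)}\;>\;k\,v_{(1)}
\]
for \emph{every} $k$, simply because $v_{(1)}<1$. Thus the Wronskian term $t\,p_{k-1}$ can never attain the tropical minimum of $f_k$, regardless of which monomials of $p_{k-1}$ happen to vanish for particular values of $m,n$. The propagation is then automatic: the minimum $k\,v_{(1)}$ is realized only by $k$-subsets of coordinates all equal to $v_{(1)}$, forcing a $(k{+}1)$-st such coordinate at each step, until $e_{m+n}$ stands alone in $f_{m+n}$ and gives the contradiction. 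There is no need to invoke the explicit form of $p_1$, and the Gfan experiments play no role in this argument.
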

We give the proof of theorem \ref{tropicalqq} in Section \ref{proofqq}.
\subsection{Deformation of solutions of infinite $QQ$-systems}\label{statementforQQsystems}
We first state our result for a connected, simply connected, simple algebraic group $G$ and then we will explicitly state the result in the $SL_2$ where we have more explicit results than the general case.

In the notation of Remark \ref{QQfinitetoinfinite} set $t_{i}:=\hat{\xi}_{i}^{-1}$.
\begin{theorem}\label{QQforgeneralg}
    Let $\{Q_{\pm}^{i}(z)\}_{i=1}^{r}$ be an isolated solution of an infinite $QQ$-system \eqref{infiniteQQsystem}. Then there exists $\epsilon>0$ and polynomials $\{Q_{\pm}^{i,Z}(z)\}_{i=1}^{r}$ that lifts $\{Q_{\pm}^{i}(z)\}_{i=1}^{r}$ and solve the system \eqref{QQsystem} for all $Z\in H$ satisfying $|t_{i}|<\epsilon$ for all $i$.
\end{theorem}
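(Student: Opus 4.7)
The plan is to run the same implicit-function-theorem strategy used for Theorem \ref{qqforgeneralg}, with the $q$-shift $z\mapsto qz$ playing the role of the Wronskian. After the substitution $\tilde{Q}_-^i := \tilde{\xi}_i Q_-^i$ of Remark \ref{QQfinitetoinfinite}, the system \eqref{QQsystem} becomes
\begin{equation*}
Q_+^i(qz)\,\tilde{Q}_-^i(z) - t_i\, Q_+^i(z)\,\tilde{Q}_-^i(qz) = \Lambda_i(z) \prod_{j\neq i}\bigl[Q_+^j(z)\bigr]^{-a_{ji}},
\end{equation*}
whose $t_i\to 0$ limit is exactly the infinite $QQ$-system \eqref{infiniteQQsystem} in the rescaled variables. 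I would write each $Q_+^i$ and each $\tilde{Q}_-^i$ as a monic polynomial with unknown coefficients, of degrees $m_i$ and $n_i$ respectively (the degrees being forced by matching the top powers of $z$ on each side, using $\deg\Lambda_i$ and the Cartan entries $a_{ji}$). Comparing coefficients of $z^s$ for $s<m_i+n_i$ produces a finite polynomial system $F(\mathbf{a},\mathbf{t})=0$ in which the number of scalar equations equals the number of scalar unknowns $\sum_i(m_i+n_i)$, and whose specialization at $\mathbf{t}=0$ is the coefficient form of \eqref{infiniteQQsystem}.

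Given the isolated solution $\mathbf{a}^{(0)}$ at $\mathbf{t}=0$, I would interpret isolatedness scheme-theoretically as the statement that the Jacobian $\partial F/\partial\mathbf{a}$ evaluated at $(\mathbf{a}^{(0)},0)$ is invertible. The holomorphic implicit function theorem then yields analytic functions $\mathbf{a}(\mathbf{t})$ on a polydisc $\{|t_i|<\epsilon\}$ with $\mathbf{a}(0)=\mathbf{a}^{(0)}$ and $F(\mathbf{a}(\mathbf{t}),\mathbf{t})\equiv 0$. Reassembling the coefficients into monic polynomials and inverting the substitution $\tilde{Q}_-^i=\tilde{\xi}_iQ_-^i$ produces the required lifts $\{Q_\pm^{i,Z}(z)\}$, depending analytically on $Z\in H$ through $t_i=\hat{\xi}_i^{-1}=\prod_k \zeta_k^{-a_{ki}}$.

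The step I expect to be the main obstacle is the invertibility of the Jacobian. Set-theoretic isolatedness alone does not automatically give a smooth deformation, since an isolated point of the infinite $QQ$-variety could a priori carry a non-reduced scheme structure, in which case the implicit function theorem does not apply. To handle this, I would proceed in parallel with the $\mathfrak{sl}_2$ treatment that the authors defer to Section \ref{proofqq}: parameterize $Q_+^i$ and $\tilde{Q}_-^i$ through their roots, exhibit the Jacobian explicitly in these coordinates, and use the multiplicative factorization structure $Q_+^i(qz)\tilde{Q}_-^i(z)=\Lambda_i(z)\prod_{j\neq i}[Q_+^j(z)]^{-a_{ji}}$ to show that a generic isolated root configuration yields a non-singular linearization. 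The calculation is the $q$-analogue of the differential one used in Theorem \ref{qqforgeneralg}, with $\partial_z$ replaced by the finite shift $z\mapsto qz$; the algebraic mechanism -- that no two root-derivatives align along a common zero of the right-hand side -- is the same, which is why the authors can legitimately describe the general-rank proof as a straightforward extension of the $\mathfrak{sl}_2$ model case.
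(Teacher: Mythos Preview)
Your overall strategy---reduce to a finite polynomial system in the coefficients and deform via the implicit function theorem---matches the paper's, but the step you flag as ``the main obstacle'' is exactly where your plan diverges from the paper and fails. You propose to interpret isolatedness scheme-theoretically so that the Jacobian is invertible, and as a fallback to ``show that a generic isolated root configuration yields a non-singular linearization''. Neither works: in the paper's $\mathfrak{sl}_2$ model (Lemma~\ref{rank}) the Jacobian in root coordinates has rank equal to the number $l$ of \emph{distinct} roots among the $b_i$, which is strictly less than $m+n$ whenever $\Lambda$ has a repeated root. So the linearization is genuinely singular in non-generic cases, and your approach would prove the theorem only under an extra separability hypothesis that the statement does not assume.

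The paper's actual mechanism is different and does not try to make the Jacobian invertible. It completes the column space of $A$ by adjoining auxiliary linear variables $s_1,\dots,s_{m+n-l}$, applies the implicit function theorem to the augmented map $\tilde{\phi}$ (whose Jacobian in the chosen $l+(m+n-l)$ directions now has full rank), and obtains a component $Y$ of dimension at least $m+n-l+1$. It then intersects with $\{s_1=\dots=s_{m+n-l}=0\}$ and invokes the affine dimension theorem (Fact~\ref{dimensiontheorem}) to get a curve $C$ through the given point. The hypothesis ``isolated'' is used only as \emph{set-theoretic} finiteness of the fibre over $\mathbf{t}=0$, which (via upper semi-continuity of fibre dimension in the general-rank case) forces the projection $pr:C\to\mathbb{C}^r$ to have image containing an open neighbourhood of $0$. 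No nonsingularity of the infinite-$QQ$ locus is ever claimed. Note also that Theorem~\ref{QQforgeneralg} asserts existence of a lift, not analyticity in $\mathbf{t}$; the analytic dependence you build into your conclusion is the content of the separate Theorem~\ref{analyticityQQ}, which requires the additional hypothesis $Q_\pm^i(0)\neq 0$ and a tropical/Puiseux argument rather than the bare implicit function theorem.
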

\begin{remark}
    The assumptions of Theorem \ref{QQforgeneralg} are satisfied for example, when we consider the cotangent bundles to partial flag varieties viewed as a Nakajima quiver variety of type $A$, the $\Lambda_{i}(z)$'s are generating functions of multiplication operators by exterior powers of tautological bundles corresponding to the framing vertices and $Q^{i}_{+}(z)$'s are generating functions of multiplication operators by exterior powers of tautological bundles corresponding to the original vertices (see Section \ref{Motivation} and \cite{KSZ} for the relation of $QQ$-systems in type $A$ with equivariant $K$-theory of Nakajima quiver varieties of type $A$). 
\end{remark}
We will give a proof of the above theorem in the case of $SL_2$. The proof in the general case is quite similar.

Using tropical geometry in the case where the roots of $Q_{\pm}^{i}(z)$'s satisfy a generic condition, we get the following stronger version of Theorem \ref{QQforgeneralg}.
\begin{theorem}\label{analyticityQQ}
    Let $\{Q_{\pm}^{i}(z)\}_{i=1}^{r}$ be an isolated solution of an infinite $QQ$-system \eqref{infiniteQQsystem} such that $Q_{\pm}^{i}(0)\neq 0$ for all $i$. Then there exists $\epsilon>0$ and polynomials $\{Q_{\pm}^{i,Z}(z)\}_{i=1}^{r}$ that lifts $\{Q_{\pm}^{i}(z)\}_{i=1}^{r}$ and solve the system \eqref{QQsystem} for all $Z\in H$ satisfying $|t_{i}|<\epsilon$ for all $i$. Moreover, the polynomials $\{Q_{\pm}^{i,Z}(z)\}_{i=1}^{r}$ are Puiseux series in $t_{1},\ldots, t_{r}$ and therefore depend analytically on $t_{1},\ldots,t_{r}$.
\end{theorem}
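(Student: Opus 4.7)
The plan is to follow the same two-step pattern as Theorem \ref{analyticityqq}: first reduce the problem to a polynomial system over the convergent Puiseux series field $L_c$ in $t_1,\ldots,t_r$, and then show that the corresponding tropical variety is trivial, so that the deformations provided by Theorem \ref{QQforgeneralg} are in fact convergent Puiseux series. Concretely, parametrize each lift by the roots of its polynomials: write $Q_{+}^{i,Z}(z)=\prod_{j=1}^{m_i}(z+x_{j}^{(i)})$ and $Q_{-}^{i,Z}(z)=\prod_{k=1}^{n_i}(z+y_{k}^{(i)})$, viewing each $x_j^{(i)}, y_k^{(i)}$ as an unknown. Substituting into \eqref{QQsystem} and comparing coefficients of powers of $z$ on both sides of each equation produces a polynomial system $F_\alpha(\{x_j^{(i)}\},\{y_k^{(i)}\}; t_1,\ldots,t_r)=0$, whose specialization at $t_1=\cdots=t_r=0$ is exactly the coefficient system arising from the infinite $QQ$-system \eqref{infiniteQQsystem}.

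Next, I would define the deformation variety $\Def_{QQ}\subset (L_c^\times)^N$ cut out by the $F_\alpha$, where $N=\sum_i(m_i+n_i)$. Existence of a lift of the isolated solution over a formal neighborhood of $t=0$ is already provided by Theorem \ref{QQforgeneralg}, and the isolated-solution hypothesis guarantees that the Jacobian of the $t=0$ system with respect to $(\{x_j^{(i)}\},\{y_k^{(i)}\})$ is invertible at the chosen basepoint. The new point is analyticity, which follows once one shows that every $L_c$-point of $\Def_{QQ}$ specializing to the basepoint has all coordinates of valuation $0$: combined with the residue-field $\mathbb{C}$ structure of $L_c$, this forces the lift to be a convergent Puiseux series, i.e.\ analytic on a polydisk around $0$.

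The central step is therefore the $QQ$-analog of Theorem \ref{tropicalqq}: show that $\trop(\Def_{QQ})\cap C=\{(0,\ldots,0)\}$, where $C$ is the connected component of the tropical variety containing the basepoint. The hypothesis $Q_{\pm}^{i}(0)\neq 0$ is what enters here, since it places the basepoint in $(\mathbb{C}^\times)^N$ and rules out valuation $+\infty$. To exclude any nonzero $\omega\in\mathbb{Q}^N$, I would examine the Newton polytopes of the $F_\alpha$ in the variables $(\{x_j^{(i)}\},\{y_k^{(i)}\}, t_1,\ldots,t_r)$ and argue that the initial forms $\operatorname{in}_\omega F_\alpha$ fail to admit a common solution in $(\mathbb{C}^\times)^N$ unless $\omega=0$. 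By the fundamental theorem of tropical algebraic geometry, the deformation must then lie over $\omega=0$, giving the desired valuation bound.

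The main obstacle will be carrying out this tropical triviality argument uniformly in the Cartan type. In the $qq$-case each equation has the very clean shape $e_k + t p_{k-1}-d_k$, where one immediately reads off the Newton polytope. For the $QQ$-system the shift $z\mapsto qz$ mixes coefficients of $x_j^{(i)}$ with powers of $q$, and the prefactors $\tilde\xi_i, \xi_i$ depend multiplicatively on the coefficients $\zeta_j$ tied to the $t_i$. One must check that despite this mixing the dominant monomial in each $F_\alpha$ at a hypothetical $\omega\neq 0$ is uniquely determined (so that the initial form is a monomial times a non-vanishing constant, yielding no solution in $(\mathbb{C}^\times)^N$). Once this balancing-type analysis is complete, Gfan-level verifications in low rank suggest the full tropical variety reduces to a point, and the analyticity conclusion follows by standard Puiseux-convergence arguments, exactly as in the $qq$-case.
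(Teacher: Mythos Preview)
There are two genuine gaps in your plan.

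First, the claim that ``the isolated-solution hypothesis guarantees that the Jacobian of the $t=0$ system with respect to $(\{x_j^{(i)}\},\{y_k^{(i)}\})$ is invertible at the chosen basepoint'' is false. When $\Lambda_i(z)$ (or more generally the right-hand side of \eqref{infiniteQQsystem}) has repeated roots, the Jacobian in root coordinates is singular: in the $SL_2$ case it is the matrix $A$ of Lemma \ref{rank}, whose rank is only the number $l$ of \emph{distinct} roots. Isolated does not imply smooth. The paper handles this by introducing $m+n-l$ auxiliary variables $s_i$ to complete $A$ to full rank, applying the implicit function theorem to the augmented system $\tilde\phi$, and then using the affine dimension theorem (Fact \ref{dimensiontheorem}) to extract a one-dimensional component $C$ inside the locus $\{s_1=\cdots=s_{m+n-l}=0\}$ whose projection to the parameter axis is open near $0$ (Lemma \ref{path}). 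None of this is bypassed by the hypotheses of Theorem \ref{analyticityQQ}.

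Second, you have set yourself the wrong tropical target. You propose to show that $\trop(\Def_{QQ})$ (near the basepoint) equals $\{\bm 0\}$, by a Newton-polytope balancing analysis ``uniformly in the Cartan type''. That is Theorem \ref{tropicalQQ}, which the paper states and proves only for $SL_2$ and only under the extra hypothesis that all coefficients $d_k$ of $\Lambda$ are nonzero; it is not used in the proof of Theorem \ref{analyticityQQ}. What is actually needed is the much weaker statement that $\bm 0\in\trop(\Def_{QQ})$, i.e.\ $in_{\bm 0}(I)\neq\langle 1\rangle$. The paper proves this (Proposition \ref{limitingargument}) by a limiting argument along the curve $C$ just described: if $in_{\bm 0}(I)=\langle 1\rangle$, Fact \ref{fact1} produces an identity $\sum g_k f_k = p_1 + t^{v} p_2$ with $p_1$ a nonzero monomial; evaluating along a sequence $c_k\in C$ with $t_k\to 0$ forces $p_1(b_1,\ldots,b_{m+n})=0$, contradicting $Q_{\pm}^{i}(0)\neq 0$. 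Then Fact \ref{ftotag} provides a valuation-$\bm 0$ point lifting the given solution. For $r>1$ the paper does not work directly over a multivariate $L_c$: it embeds $t_i\mapsto t^{\alpha_i}$ into the Hahn series field $\mathbb{C}[[t^{\Gamma}]]$ with $\Gamma=\alpha_1\mathbb{Q}+\cdots+\alpha_r\mathbb{Q}$ (Fact \ref{Hahnseries}), runs the above argument there, and then invokes \cite{AI} to recover genuine multivariate Puiseux series with bounded denominators. Your outline omits this reduction entirely.
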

To state the next result and for later use,
let us explicitly formulate Theorem \ref{QQforgeneralg} for $SL_2$.

Let $Q_{+}^{t}(z)=\prod_{i=1}^{m}(z+x_{i}(t))$, $Q_{-}^{t}(z)=\alpha(t)\prod_{j=1}^{n}(z+y_{j}(t))$ and $\Lambda(z)=\prod_{k=1}^{l}(z+a_{k})^{m_{k}}$, where $t$ is a formal parameter, $a_{k}\in\mathbb{C}$, $k=1,\ldots,l$, $a_{k}$'s are distinct and $\sum_{k=1}^{l}m_{k}=m+n$. As in the differential case $\Lambda(z)$ does not depend on the parameter $t$. Substituting $Q_{+}^{t}(z)$, $Q_{-}^{t}(z)$  and $\Lambda(z)$ in the equation of the finite $QQ$-system \eqref{sl2QQ} with parameter $t$, we get (recall that $\Lambda(z)$ is monic)
\[
\alpha(t)=\frac{1}{q^{m}-tq^{n}}
\]
and the following set of equations upon comparing the coefficients of $z^{m+n-k}$, $k=1,\ldots,m+n$:
\[
g_{k}:=\frac{q^m}{q^{m}-tq^{n}} e_{k}\bigg(\frac{x_{1}}{q},\ldots,\frac{x_{m}}{q},y_{1},\ldots,y_{n}\bigg)
-
\frac{tq^n}{q^{m}-tq^{n}}
e_{k}\bigg(x_{1},\ldots,x_{m},\frac{y_{1}}{q},\ldots,\frac{y_{n}}{q}\bigg)
-
d_{k}
=
0,
\]
where $e_{k}$ is the elementary symmetric polynomial of degree $k$ and $d_{k}$ is the coefficient of $z^{m+n-k}$ in $\Lambda(z)$.

When $G=SL_{2}$, we have the following stronger statement.
\begin{theorem}\label{SL2analyticity}
    Assume that $\Lambda(0)\neq 0$. For every choice $(x_{1}(0),\ldots,x_{m}(0),y_{1}(0),\ldots,y_{n}(0))$ of a solution of the infinite $QQ$-system \eqref{infinitesl2QQ}, there exists $\epsilon>0$ and analytic functions $x_{1}(t),\ldots,x_{m}(t),y_{1}(t),$ $\ldots,y_{n}(t)$ that solve the finite $QQ$-system \eqref{sl2QQ} with parameter $t$ for all $t\in\mathbb{C}$ such that $|t|<\epsilon$. Moreover, in this way  we get all solutions of the system \eqref{sl2QQ} (and the corresponding Bethe equations \eqref{betheSL2}) for small enough $t$.
\end{theorem}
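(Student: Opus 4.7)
The strategy is to derive Theorem~\ref{SL2analyticity} by combining the existence statement of Theorem~\ref{QQforgeneralg} for $G=SL_2$, the Puiseux-series convergence from Theorem~\ref{analyticityQQ}, and a Bezout-style count to establish the \emph{moreover} clause.

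First I would identify the solutions of the infinite system $Q_+(qz)Q_-(z)=\Lambda(z)$ as precisely the ordered tuples $(x_i,y_j)\in\mathbb{C}^{m+n}$ such that the multiset $\{x_i/q\}\cup\{y_j\}$ coincides with the root multiset of $\Lambda$. These form a finite subset of $\mathbb{C}^{m+n}$, namely the fiber of the elementary-symmetric-polynomial map $(\xi_1,\ldots,\xi_{m+n})\mapsto(e_1,\ldots,e_{m+n})$ over $(d_1,\ldots,d_{m+n})$, and each tuple is automatically an isolated solution of the equations $g_k(x,y;0)=0$ derived preceding the theorem. Theorem~\ref{QQforgeneralg} in the $SL_2$ case then produces a polynomial lift $(Q_+^{t},Q_-^{t})$ of each tuple into the finite $QQ$-system \eqref{sl2QQ} for $|t|<\epsilon$. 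Under the hypothesis $\Lambda(0)\neq 0$ every $x_i(0),y_j(0)$ is nonzero, hence $Q_\pm(0)\neq 0$, and Theorem~\ref{analyticityQQ} upgrades the lifts so that $x_i(t),y_j(t)$ are convergent Puiseux series in $t$ around the origin; this is the sense of ``analytic'' meant here, in line with the language already used in the statement of Theorem~\ref{analyticityQQ}.

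For the completeness (\emph{moreover}) clause I would argue by a degree count. After clearing the common denominator $q^m-tq^n$, the system $\{g_k=0\}_{k=1}^{m+n}$ is polynomial in $(x,y)$ of degree $k$ in the $k$-th equation, giving the Bezout bound $\prod_{k=1}^{m+n}k=(m+n)!$; one verifies there are no solutions at infinity by inspecting the top-degree part of each $g_k$, which remains the honest elementary symmetric polynomial. At $t=0$ the zero-locus consists of exactly $(m+n)!/\prod_{k}m_k!$ distinct tuples, each carrying Bezout multiplicity $\prod_{k}m_k!$ arising from the number of orderings of the $\xi$-entries equal to a given repeated root of $\Lambda$. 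The Puiseux families produced by Theorem~\ref{analyticityQQ} therefore account, with total ramification index equal to Bezout multiplicity at each tuple, for all $(m+n)!$ solutions of the finite system at generic small $t$, so every such solution arises as the deformation of some infinite-system tuple.

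The main technical obstacle is the last point in the Bezout matching: at a tuple $(x(0),y(0))$ whose $\xi$-entries have coincidences (inevitable once $\Lambda$ has a repeated root), the Jacobian of $g=(g_1,\ldots,g_{m+n})$ in $(x,y)$ at $t=0$ is proportional to the Vandermonde in the $\xi_i$'s and therefore vanishes, so the implicit function theorem cannot be invoked directly. One must instead enumerate the Puiseux branches emanating from such a tuple using the Newton-polygon / tropical input underlying Theorem~\ref{analyticityQQ}, combined with the $S_m\times S_n$-symmetry of the $g_k$-system, and confirm that the total ramification index at the tuple equals its Bezout multiplicity $\prod_k m_k!$, so that no small-$t$ solution is missed by the constructed family.
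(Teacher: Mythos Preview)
Your reduction of the existence and analyticity claims to Theorems~\ref{QQforgeneralg} and~\ref{analyticityQQ} is sound: under $\Lambda(0)\neq 0$ every solution of the infinite system is isolated and satisfies $Q_{\pm}(0)\neq 0$, so both hypotheses are met. Note, however, that the paper runs the logic in the opposite direction. Theorem~\ref{SL2analyticity} is proved directly, by transporting the $\mathfrak{sl}_2$ argument of Theorem~\ref{sl2analyticity}: one computes the rank of the Jacobian of the map $(x,y,t)\mapsto(\tilde g_1,\ldots,\tilde g_{m+n})$ at $t=0$, completes it to full rank with auxiliary variables $s_1,\ldots,s_{m+n-l}$, applies the implicit function theorem and the affine dimension theorem to produce a curve $C$ lying over an open neighborhood of $t=0$, and then runs the tropical contradiction of Proposition~\ref{limitingargument} to force a convergent Puiseux solution of valuation~$\bm 0$. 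The general-$G$ theorems are only sketched afterward by reference to this argument, so your shortcut leans on results whose proofs in the paper are themselves modeled on the theorem you are trying to establish.

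Your Bezout argument for the \emph{moreover} clause has a concrete error. After clearing $q^{m}-tq^{n}$, the degree-$k$ part of $g_{k}$ is
\[
q^{m}\,e_{k}\!\Big(\tfrac{x_{1}}{q},\ldots,\tfrac{x_{m}}{q},y_{1},\ldots,y_{n}\Big)
\;-\;
t\,q^{n}\,e_{k}\!\Big(x_{1},\ldots,x_{m},\tfrac{y_{1}}{q},\ldots,\tfrac{y_{n}}{q}\Big),
\]
which is \emph{not} the elementary symmetric polynomial $e_{k}(x,y)$; already for $m=n=k=1$ it equals $(1-tq)x_{1}+(q-t)y_{1}$. (The claim would be correct for the $qq$-system, where $f_{k}=e_{k}+tp_{k-1}-d_{k}$ and the degree-$k$ part really is $e_{k}$; the $q$-shift in the $QQ$ equation spoils this.) So ``no solutions at infinity'' is not verified by your inspection, and would require a separate argument about the common zero locus of these $q$- and $t$-twisted forms. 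Together with the multiplicity-matching obstacle you already flag---showing that the Puiseux branches emanating from a tuple with repeated $\xi$-entries have total ramification exactly $\prod_{k}m_{k}!$---the completeness part is not actually carried out. The paper does not attempt a Bezout count; its handle on completeness comes instead from the tropical side (Theorem~\ref{tropicalQQ} and Corollary~\ref{finitenessofdef}), which pins $\trop(\Def_{QQ})$ to the origin and hence forces every small-$t$ solution to specialize to an infinite one.
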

\begin{remark} There are various versions \cite{KZ} of $QQ$-systems. For example, there is an additive version of the finite (resp. infinite) $QQ$-system, which is defined using the additive shift $z\mapsto z+\hbar$ in place of $z\mapsto qz$ in the system \eqref{QQsystem} (resp. \eqref{infiniteQQsystem}). The methods used in the case of $QQ$-systems for deformations of solutions works quite similarly for these versions.
\end{remark}
Assume now that $\Lambda(0)\neq 0$. Consider the variety $\Def_{QQ}:=V\big(\{g_{1},\ldots,g_{m+n}\}\big)\subset(L_{c}^{\times})^{m+n}$.
Motivated by the differential case, we have the following result.
\begin{theorem}\label{tropicalQQ}
     Assume that all the coefficients $d_{i}$'s of $\Lambda(z)$ are non-zero. Then set of polynomials $g_{1},\ldots,g_{m+n}$  forms a tropical basis of the tropical variety $\trop(\Def_{QQ})$ (see definition \ref{definitionoftropicalvariety}) and $\trop(\Def_{QQ})=\{(0,\ldots,0)\}$. 
\end{theorem}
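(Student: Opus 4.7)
The plan is to parallel the argument of Theorem \ref{tropicalqq} in the differential case, using Kapranov's theorem together with a direct combinatorial analysis of each $\trop(g_k)$.

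First I would record $\trop(g_k)$. A short computation shows that the coefficient of $x_I y_J$ in $g_k$, for $|I|+|J|=k$, is the rational function $(q^{m-|I|}-tq^{n-|J|})/(q^{m}-tq^{n})$, whose leading $t$-adic term is $q^{-|I|}\neq 0$; likewise the constant term $-d_k$ has valuation $0$ by the hypothesis on the $d_k$. Hence, writing $w=(u_1,\ldots,u_m,v_1,\ldots,v_n)$,
\[
\trop(g_k)(w) \;=\; \min\Bigl\{0,\ \min_{|I|+|J|=k}\bigl(\sum_{i\in I} u_i + \sum_{j\in J} v_j\bigr)\Bigr\}.
\]

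Next I would show that any $w\in\bigcap_{k=1}^{m+n}\trop(V(g_k))$ must vanish. Reorder the coordinates as $w_1\le\cdots\le w_{m+n}$ and set $S_k=w_1+\cdots+w_k$. For $k=m+n$ there is a unique non-constant monomial, of value $S_{m+n}$, so the double-minimum condition forces $S_{m+n}=0$. If some $w_i\neq 0$ then $w_1<0$; let $k^*$ be the largest index with $w_{k^*}<0$, so $k^*<m+n$ and $w_{k^*}<0\le w_{k^*+1}$, and in particular $w_{k^*}<w_{k^*+1}$. Then $S_{k^*}<0$ is the minimum value appearing in $\trop(g_{k^*})$, attained only by the unique $k^*$-subset $\{1,\ldots,k^*\}$, contradicting $w\in\trop(V(g_{k^*}))$. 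Hence $w=0$.

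Finally I would invoke Theorem \ref{analyticityQQ}. The hypothesis $\Lambda(0)\neq 0$ forces every root of any isolated infinite solution to be nonzero, so the theorem supplies convergent Puiseux series lifts $x_i(t),y_j(t)\in L_c^{\times}$ with nonzero constant term, i.e.\ of valuation $0$. Thus $(0,\ldots,0)\in\trop(\Def_{QQ})$. Combined with the Kapranov inclusion $\trop(\Def_{QQ})\subseteq\bigcap_k\trop(V(g_k))$ and the previous step,
\[
\trop(\Def_{QQ}) \;=\; \bigcap_{k=1}^{m+n}\trop(V(g_k)) \;=\; \{(0,\ldots,0)\},
\]
which simultaneously establishes the identification of the tropical variety and the tropical basis property. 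The main obstacle is the combinatorial step: a priori, double-minimum configurations in $\trop(g_k)$ are abundant, and the point is that the $g_{m+n}$ constraint together with the strict sign change at the transition index $k^*$ rules out all of them at once.
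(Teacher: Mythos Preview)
Your proof is correct and follows the same overall architecture as the paper: verify that all coefficients of $g_k$ have valuation $0$, show $\bigcap_k\trop(V(g_k))=\{0\}$ by a combinatorial argument, and invoke Theorem \ref{analyticityQQ} to produce a point of $\Def_{QQ}$ with all coordinates of valuation $0$.

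The combinatorial step is organized differently, however. The paper's template (from Theorem \ref{tropicalqq}) starts at $g_1$ and splits into the two cases \emph{(i)} all $w_i\ge 0$ with some $w_{i_0}=0$, or \emph{(ii)} at least two coordinates tied at a negative minimum; it then walks through $g_2,g_3,\ldots$ in order, forcing one more coordinate to be $0$ (case (i)) or equal to the common negative minimum (case (ii)) at each step, and finally uses $g_{m+n}$ to kill case (ii). Your argument instead goes straight to $g_{m+n}$ to obtain $\sum_i w_i=0$, then locates a single transition index $k^*$ where the sign changes strictly and derives a contradiction from $g_{k^*}$ alone. This is more economical and avoids the case split entirely; the paper's route, on the other hand, makes the inductive structure explicit and adapts verbatim from the $qq$ setting (where the extra Wronskian terms of positive valuation have to be tracked through the iteration). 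For the $QQ$ system, where every non-constant monomial of $g_k$ has degree exactly $k$, your shortcut is clean and arguably preferable.

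Two minor remarks. First, the inclusion $\trop(\Def_{QQ})\subseteq\bigcap_k\trop(V(g_k))$ is immediate from the definition of $\trop$ as an intersection over all $f\in I$; calling it the ``Kapranov inclusion'' is a slight misnomer (Kapranov's theorem concerns a single hypersurface). Second, when you appeal to Theorem \ref{analyticityQQ} you are implicitly using that the infinite $SL_2$ $QQ$-system has an isolated solution with $Q_\pm(0)\neq 0$; this is automatic here since the solutions are exactly the finitely many factorizations of $\Lambda$ and $d_{m+n}=\Lambda(0)\neq 0$, but it is worth saying. Neither point affects the validity of your argument.
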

We give the proof of theorem \ref{tropicalQQ} in Section \ref{proofQQ}.
\section{Tropical geometry}\label{tropicalgeometry}
In this section we recall some facts from tropical geometry. We will use them in the next two sections to give proofs of Theorems \ref{qqforgeneralg}, \ref{analyticityqq}, \ref{sl2analyticity}, \ref{tropicalqq}, \ref{QQforgeneralg}, \ref{analyticityQQ}, \ref{SL2analyticity} and \ref{tropicalQQ}.

We will denote by $K$ a valued field with valuation $val$ and its residue field by $\mathds{k}$. Let $\Gamma_{val}$ denote the value group of $K$. We assume that $val:K\backslash\{0\}\rightarrow \Gamma_{val}$ has a splitting, that is, there exists a map $\Gamma_{val}\rightarrow K\backslash{0}$, $w\mapsto t^{w}$ such that $val(t^{w})=w$. For any $a\in K$ that lies in the valuation ring of $K$, we denote by $\overline{a}$ the image of $a$ in the residue field $\mathds{k}$.
\begin{definition}
    Let $f\in K[x_{1}^{\pm},\ldots,x_{n}^{\pm}]$ be a Laurent polynomial, write $f=\sum_{\bm{u}\in\mathbb{Z}^{n}}c_{\bm{u}}x^{\bm{u}}$.
    \begin{enumerate}
        \item The tropical hypersurface $\trop(V(f))$ is the set
    \[
    \{\bm{w}\in\mathbb{R}^{n}:\text{ the minimum in }\trop(f)(\bm{w})\text{ is achieved at least twice}\},
    \]
where $\trop(f)(\bm{w})=min_{\bm{u}\in\mathbb{Z}^{n}}(val(c_{\bm{u}})+\sum_{i=1}^{n}u_{i}w_{i})$.
\item For $\bm{w}\in\mathbb{R}^n$, the initial form $in_{\bm{w}}(f)\in\mathds{k}[x_{1}^{\pm},\ldots,x_{n}^{\pm}]$ is defined as 
\[
in_{\bm{w}}(f)=\sum_{\bm{u}:val(c_{\bm{u}})+\bm{w}\cdot\bm{u}=\trop(f)(\bm{w})}\overline{t^{-val(c_{\bm{u}})}c_{\bm{u}}}x^{\bm{u}}.
\]
\item Let $I$ be an ideal in $K[x_{1}^{\pm},\ldots,x_{n}^{\pm}]$. For $\bm{w}\in\mathbb{R}^n$, the initial ideal $in_{\bm{w}}(I)$ is the ideal in $\mathds{k}[x_{1}^{\pm},\ldots,x_{n}^{\pm}]$ generated by the initial forms $in_{\bm{w}}(f)$ for all $f\in I$.
\end{enumerate}
\end{definition}
\begin{example}
Recall the field $L$ of Puiseux series from Section \ref{basicsonpuiseuxseries}.  Let $f=(2t+t^{3})x_{1}x_{2}-4t^{3}x_{3}+t^{8}x_{2}x_{3}\in L[x_{1}^{\pm},x_{2}^{\pm},x_{3}^{\pm}]$. If $\bm{w}=(0,0,0)$, then $\trop(f)(\bm{w})=1$ and $in_{\bm{w}}(f)=2x_{1}x_{2}$. If $\bm{w}=(1,1,-2)$, then $\trop(f)(\bm{w})=1$ and $in_{\bm{w}}(f)=-4x_{3}$.    
\end{example}
We will need the following fact (see \cite{MS}*{Lemma 2.6.2 (1)}):
\begin{fact}\label{fact1}
    Let $I$ be an ideal in $K[x_{1}^{\pm},\ldots,x_{n}^{\pm}]$ and $\bm{w}\in\mathbb{R}^{n}$. If $g\in in_{\bm{w}}(I)$, then $g=in_{\bm{w}}(h)$ for some $h\in I$.
\end{fact}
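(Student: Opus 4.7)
The plan is to realize any $g \in in_{\bm w}(I)$ as the initial form of an explicit element of $I$, constructed via a weight-normalized lift. By definition, $in_{\bm w}(I)$ is the ideal in $\mathds k[x_1^{\pm},\ldots,x_n^{\pm}]$ generated by the set $\{in_{\bm w}(f) : f \in I\}$, so one may write $g = \sum_i p_i \cdot in_{\bm w}(f_i)$ with $p_i \in \mathds k[x_1^{\pm},\ldots,x_n^{\pm}]$ and $f_i \in I$. The identity $x^{\bm v} \cdot in_{\bm w}(f) = in_{\bm w}(x^{\bm v} f)$, immediate since monomial multiplication shifts $\bm w \cdot \bm u$ and $\trop(f)(\bm w)$ by the same constant, allows me to expand each $p_i$ monomial by monomial and reduce to $g = \sum_j c_j \cdot in_{\bm w}(F_j)$ with $c_j \in \mathds k^{\times}$ and $F_j \in I$.

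The crux is to align the tropical weights $m_j := \trop(F_j)(\bm w)$. After possibly enlarging $K$ so that all differences $M - m_j$ lie in the value group, where $M := \max_j m_j$ (a harmless extension of valued fields that preserves both $I$ and the target $g$), I replace each $F_j$ by $t^{M-m_j} F_j \in I$. A direct computation from the definition shows that multiplying by $t^s$ with $s \geq 0$ leaves $in_{\bm w}(F_j)$ unchanged while adding $s$ to $\trop(F_j)(\bm w)$, so after the replacement every summand of the expression $g = \sum_j c_j \cdot in_{\bm w}(F_j)$ carries the common tropical weight $M$.

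Now lift each $c_j$ to $\tilde c_j \in K$ with $val(\tilde c_j) = 0$ and $\overline{\tilde c_j} = c_j$, and set $h := \sum_j \tilde c_j F_j \in I$. Each summand has tropical weight $M$, so by the non-Archimedean triangle inequality every coefficient $c'_{\bm u}$ of $h$ satisfies $val(c'_{\bm u}) + \bm w \cdot \bm u \geq M$, i.e., $\trop(h)(\bm w) \geq M$. Reducing the weight-$M$ part of $h$ modulo the maximal ideal of the valuation ring of $K$ yields precisely $\sum_j c_j \cdot in_{\bm w}(F_j) = g$; whenever $g \neq 0$, this forces $\trop(h)(\bm w) = M$ and $in_{\bm w}(h) = g$ on the nose. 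The case $g = 0$ is handled trivially by taking $h = 0$.

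The main obstacle is the weight-alignment step. The naive attempt $h := \sum_j \tilde c_j F_j$ without first equalizing the $m_j$'s would drop any summand with $m_j > \min_k m_k$ from $in_{\bm w}(h)$, since such summands contribute only to strictly higher tropical weight and therefore never appear among the initial terms of $h$. The crucial asymmetry that $in_{\bm w}$ is invariant under multiplication by $t^s$ while $\trop$ shifts by $s$ is precisely what enables the normalization without altering the combination of initial forms that one is trying to realize, and it is the only nonformal ingredient in the argument.
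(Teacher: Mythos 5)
The paper does not prove this statement; it is quoted verbatim from Maclagan--Sturmfels \cite{MS}*{Lemma 2.6.2(1)}, so there is no in-paper proof to compare against. Your argument (expand $g$ monomial by monomial, align the tropical weights by multiplying the $F_j$ by $t^{M-m_j}$, lift the residue-field coefficients, and sum) is precisely the standard proof of that lemma, and the place where you flag ``the only nonformal ingredient'' --- the weight-alignment step --- is indeed the heart of the matter.

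However, exactly there you have a genuine gap, and it is not fixable in the generality in which the Fact is stated here. The normalization $F_j \mapsto t^{M-m_j}F_j$ requires $M-m_j\in\Gamma_{val}$, i.e.\ that $t^{M-m_j}$ exists in $K$. Since $m_j = val(c)+\bm{w}\cdot\bm{u}$ for some term of $F_j$, this holds automatically when $\bm{w}\in\Gamma_{val}^n$ but not otherwise. Your patch --- ``enlarge $K$'' --- produces an element $h'\in I\cdot K'[x_1^{\pm},\dots,x_n^{\pm}]$ with $in_{\bm{w}}(h')=g$, but the statement asks for $h\in I$, and there is no descent: projecting $h'=\sum_j\tilde c_j t^{M-m_j}F_j$ along a $K$-basis of $K'$ containing $1$ just discards every summand with $m_j<M$. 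In fact the statement is \emph{false} as written for general $\bm{w}\in\mathbb{R}^n$ over the Puiseux field $L$ (where $\Gamma_{val}=\mathbb{Q}$): take $n=2$, $I=\langle y-1\rangle$, $\bm{w}=(\pi,0)$, and $g=(1+x)(y-1)\in in_{\bm{w}}(I)$. Any $h\in I$ is $q\cdot(y-1)$, so $in_{\bm{w}}(h)=in_{\bm{w}}(q)\cdot(y-1)$; the support of any initial form $in_{\bm{w}}(q)$ lies in a single coset of $\Gamma_{val}$ under $\bm{u}\mapsto\bm{w}\cdot\bm{u}$, whereas $1+x$ has support meeting two distinct cosets since $\pi\notin\mathbb{Q}$. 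Hence $g$ is never $in_{\bm{w}}(h)$. The hypothesis should be $\bm{w}\in\Gamma_{val}^n$ (as in Maclagan--Sturmfels); under that hypothesis your argument goes through verbatim, with no field extension needed. For the paper's actual application the issue is invisible, since the Fact is invoked only at $\bm{w}=\bm{0}$.
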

\begin{definition}\label{definitionoftropicalvariety}
    Let $I$ be an ideal in $K[x_{1}^{\pm},\ldots,x_{n}^{\pm}]$ and let $X=V(I)$ be its variety in the algebraic torus $T^{n}\cong(K^{\times})^{n}$. The tropical variety associated to $X$ is defined to be the following intersection of tropical hypersurfaces:
    \[
    \trop(X)=\bigcap_{f\in I}\trop(V(f)).
    \]
\end{definition}
We recall the following result (see \cite{MS}*{Theorem 3.2.3}), known as the Fundamental theorem of tropical algebraic geometry.
\begin{fact}\label{ftotag}
    Let $K$ be an algebraically closed field with nontrivial $val$, let $I$ be an ideal in $K[x_{1}^{\pm},\ldots,x_{n}^{\pm}]$ and let $X=V(I)$ be its variety in the algebraic torus $T^{n}\cong(K^{\times})^{n}$. Then the following three subsets of $\mathbb{R}^{n}$ coincide:
    \begin{enumerate}[label=(\roman*)]
        \item the tropical variety $\trop(X)$;
        \item the set of all vectors $\bm{w}\in\mathbb{R}^{n}$ with $in_{\bm{w}}(I)\neq\langle1\rangle$;
        \item the closure of the set of coordinatewise valuations of points in $X$,
        \[
        val(X)=\{(val(y_{1}),\ldots,val(y_{n})):(y_{1},\ldots,y_{n})\in X\}
        \]
    \end{enumerate}
\end{fact}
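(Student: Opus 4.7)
The plan is to establish the three claimed equalities circularly, proving $(ii) \Leftrightarrow (i)$, then $(iii) \subseteq (i)$, and finally the substantive containment $(i) \subseteq (iii)$. The first two pieces are essentially bookkeeping; the third is the entire content of the theorem.

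The equivalence $(i) \Leftrightarrow (ii)$ is almost tautological given Fact \ref{fact1}. A point $\bm{w}$ lies in $\trop(V(f))$ exactly when $in_{\bm{w}}(f)$ is not a Laurent monomial, equivalently not a unit in $\mathds{k}[x_1^{\pm},\ldots,x_n^{\pm}]$. So $\bm{w} \notin \trop(X)$ iff there exists $f \in I$ whose initial form at $\bm{w}$ is a monomial iff $in_{\bm{w}}(I)$ contains a unit iff $in_{\bm{w}}(I) = \langle 1 \rangle$. The nontrivial direction of the penultimate step invokes Fact \ref{fact1}: if $1 \in in_{\bm{w}}(I)$, then $1 = in_{\bm{w}}(h)$ for some actual $h \in I$, and then $\bm{w} \notin \trop(V(h)) \supseteq \trop(X)$.

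For $(iii) \subseteq (i)$, take $y \in X$ and $f = \sum_{\bm{u}} c_{\bm{u}} x^{\bm{u}} \in I$. Writing $0 = f(y) = \sum c_{\bm{u}} y^{\bm{u}}$ in the valued field $K$, the minimum of the valuations $val(c_{\bm{u}}) + \bm{u}\cdot val(y)$ cannot be attained uniquely, since a strictly minimal term could never be cancelled by summands of larger valuation. Hence $val(y) \in \trop(V(f))$ for every $f \in I$, and the closure is absorbed because each $\trop(V(f))$ is closed in $\mathbb{R}^n$.

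The hard inclusion is $(i) \subseteq (iii)$, and this is where the plan must invest its effort. Given $\bm{w} \in \trop(X)$, I would first perform the coordinate change $x_i \mapsto t^{w_i} x_i$ (allowed since $\bm{w}$ can be approximated by elements of $\Gamma_{val}$ by density, and the target is a closure) to reduce to the case $\bm{w} = \bm{0}$. Then by $(ii)$ the initial ideal $in_{\bm{0}}(I) \subsetneq \mathds{k}[x_1^{\pm},\ldots,x_n^{\pm}]$ is proper, and because $K$ is algebraically closed with nontrivial valuation the residue field $\mathds{k}$ is algebraically closed as well, so Hilbert's Nullstellensatz yields a point $\bar{y} \in V(in_{\bm{0}}(I)) \subseteq (\mathds{k}^\times)^n$. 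It then suffices to lift $\bar{y}$ to a point of $X$ with valuation $\bm{0}$. In the zero-dimensional case this is a direct Hensel/Newton lifting argument using the algebraic closedness of $K$: solve one coordinate at a time using a system of polynomials whose reduction has $\bar{y}$ as a simple point after possibly cutting down further. The general case reduces to the zero-dimensional case by intersecting $X$ with sufficiently generic affine hyperplanes chosen so that both membership of $\bm{w}$ in the tropical variety and non-emptiness of the initial degeneration are preserved; this is where the Bieri--Groves-type dimension bound $\dim \trop(X) = \dim X$ enters, guaranteeing that generic slices do not collapse. The step I expect to be most delicate is precisely this dimension reduction, because naive hyperplane cuts can destroy the initial-ideal condition at $\bm{w}$; the cleanest remedy is to work with slices defined over a large enough subfield of $K$ where generic parameter choices exist and verify tropical transversality by a Bertini-type argument. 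Once the zero-dimensional lift is constructed over some finite extension, the ambient algebraic closedness delivers the desired point $y \in X \subseteq (K^\times)^n$ with $val(y)$ arbitrarily close to $\bm{w}$, and taking closure produces the equality (iii) $=$ (i).
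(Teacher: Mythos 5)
This statement is labeled a \emph{Fact} in the paper and is cited verbatim from Maclagan--Sturmfels, Theorem 3.2.3; the paper supplies no proof of its own, so there is no in-house argument to match yours against. What you have written is an independent reconstruction of the standard proof of the Fundamental Theorem of Tropical Algebraic Geometry (Kapranov's theorem plus the higher-dimensional lifting), and as such it should be judged on its own.

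Your treatment of $(i)\Leftrightarrow(ii)$ and $(iii)\subseteq(i)$ is complete and correct: the former reduces to the observation that the units of $\mathds{k}[x_1^{\pm},\ldots,x_n^{\pm}]$ are precisely the nonzero Laurent monomials, combined with Fact~\ref{fact1} to realize $1\in in_{\bm{w}}(I)$ as the initial form of an actual element of $I$; the latter is the ultrametric cancellation argument, and you correctly note that taking closure is harmless because each tropical hypersurface is closed.

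On the surjectivity direction $(i)\subseteq(iii)$ there are two places where your sketch leans on facts that themselves need proof and that Maclagan--Sturmfels handle differently. First, the reduction to $\bm{w}\in\Gamma_{val}^{n}$ requires knowing that $\Gamma_{val}^{n}\cap\trop(X)$ is dense in $\trop(X)$; this is not a consequence of $\Gamma_{val}$ being dense in $\mathbb{R}$, but of the structure theorem asserting that $\trop(X)$ is a $\Gamma_{val}$-rational polyhedral complex. That structure theorem has to be in hand \emph{before} the lifting argument, so it cannot be quietly assumed. Second, the dimension reduction to the zero-dimensional case is not done in the reference by slicing with affine hyperplanes (which, as you yourself flag, can destroy the condition $in_{\bm{w}}(I)\neq\langle 1\rangle$, since initial ideals of sums are not sums of initial ideals); rather one chooses a monomial projection to a lower-dimensional torus that restricts to a finite map on the initial degeneration (a tropical Noether normalization), and then lifts fiberwise using algebraic closedness of $K$. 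Your Bertini-style remedy could in principle be made to work, but it would require a separate transversality argument that the projection route avoids entirely. These are genuine gaps in the sketch, though not wrong ideas --- they are the very points at which the published proof expends its effort.
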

\begin{remark}\label{kapranov}
\begin{enumerate}[label=(\roman*)]
    \item In the proof of Fact \ref{ftotag}, it is shown that the set of vectors in $(ii)$ lying in $\Gamma^{n}_{val}$ is equal to $val(X)$. 
    \item In the special case when $X$ is defined by a single Laurent polynomial $f$, Fact \ref{ftotag} was proved by Kapranov (see \cite{EKL}) and in this case, we have
    \[
    \trop(X)=\{\bm{w}\in\mathbb{R}^n:in_{\bm{w}}(f)\neq\langle 1\rangle\}.
    \]
 \end{enumerate}
\end{remark}
\begin{definition}
    A finite generating set $\mathcal{T}$ for an ideal $I\subset K[x_{1}^{\pm},\ldots,x_{n}^{\pm}]$ is said to be a tropical basis of $I$ if 
    \[ \trop(V(I))=\bigcap_{f\in\mathcal{T}}\trop(V(f)).
    \]
\end{definition}
Recall that a minimal associated prime of an ideal $I$ in a commutative ring $R$ is a prime ideal of $R$ containing $I$ and is minimal with this property. Hence minimal associated primes correspond to the irreducible components of $Spec(R/I)$.  The following result (\cite{MS}*{Lemma 3.2.6}) will give us that the variety of deformations is finite.
\begin{fact}\label{finiteness}
    Let $X\subset T^n$ be an irreducible variety of dimension $d$, with prime ideal $I\subset K[x_{1}^{\pm},\ldots,x_{n}^{\pm}]$, and let $\bm{w}\in \trop(X)\cap\Gamma^{n}_{val}$. Then all minimal associated primes of the initial ideal $in_{\bm{w}}(I)$ in $\mathds{k}[x_{1}^{\pm},\ldots,x_{n}^{\pm}]$ have the same dimension $d$.
\end{fact}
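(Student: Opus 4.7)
My strategy is to realize the passage from $I$ to $in_{\bm{w}}(I)$ as a flat degeneration over the valuation ring of $K$, whose generic fiber is $X$ and whose special fiber is cut out by $in_{\bm{w}}(I)$. Equidimensionality of the special fiber will then follow from flatness together with the dimension formula for flat morphisms.

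\textbf{Reduction to $\bm{w}=\bm{0}$.} Using the splitting $w\mapsto t^{w}$, the rule $\phi(x_{i})=t^{w_{i}}x_{i}$ defines a $K$-algebra automorphism of $K[x_{1}^{\pm},\ldots,x_{n}^{\pm}]$. A direct coefficient computation shows that $in_{\bm{0}}(\phi(f))=in_{\bm{w}}(f)$ for every Laurent polynomial $f$, and hence $in_{\bm{0}}(\phi(I))=in_{\bm{w}}(I)$. Since $\phi(I)$ is still prime of dimension $d$, after replacing $I$ by $\phi(I)$ I may assume $\bm{w}=\bm{0}$.

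\textbf{Setting up the flat family.} Let $R=\{a\in K: val(a)\geq 0\}$ be the valuation ring with maximal ideal $\mathfrak{m}$ and residue field $\mathds{k}$, and put $J:=I\cap R[x^{\pm}]$ and $A:=R[x^{\pm}]/J$. Since $A$ embeds into the domain $K[x^{\pm}]/I$, it is $R$-torsion-free, and over a valuation ring torsion-free is equivalent to flat. The generic fiber $A\otimes_{R}K=K[x^{\pm}]/I$ has Krull dimension $d$. For the special fiber, one inclusion $\bar{J}\subseteq in_{\bm{0}}(I)$ is immediate; conversely, if $g\in in_{\bm{0}}(I)$, then by Fact \ref{fact1} we have $g=in_{\bm{0}}(h)$ for some $h\in I$, and after scaling $h$ by $t^{-v}$ where $v=\min_{\bm{u}} val(c_{\bm{u}})$ is the minimal coefficient valuation of $h$, the element $t^{-v}h$ lies in $J$ and reduces modulo $\mathfrak{m}$ to $g$. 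Hence $A\otimes_{R}\mathds{k}=\mathds{k}[x^{\pm}]/in_{\bm{0}}(I)$.

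\textbf{Dimension preservation and main obstacle.} Fix a minimal prime $P$ of $in_{\bm{0}}(I)$ in $\mathds{k}[x^{\pm}]$; the goal is $\dim(\mathds{k}[x^{\pm}]/P)=d$. For a flat, finite-type morphism of Noetherian schemes, the dimension formula forces each irreducible component of the special fiber to have dimension equal to the generic fiber, which would finish the argument. The technical bottleneck is that $R$ need not be Noetherian when $\Gamma_{val}$ is non-discrete (e.g.\ already for $\Gamma_{val}=\mathbb{Q}$ on the Puiseux series), so the Noetherian dimension formula does not apply to $\mathrm{Spec}(A)\to\mathrm{Spec}(R)$ directly. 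The standard workaround, which I would implement as the final step, is to descend the entire setup to a discrete subvaluation: a finite generating set of $J$ involves only finitely many coefficients of $K$, and since $\bm{w}\in\Gamma^{n}_{val}$ all of these coefficients, together with the relevant elements $t^{w_{i}}$, lie in a finitely generated subfield $K_{0}\subset K$ on which $val$ restricts to a height-one discrete valuation. Base-changing to the corresponding DVR $R_{0}\subset R$ preserves flatness of $A$ and the identification $A\otimes\mathds{k}=\mathds{k}[x^{\pm}]/in_{\bm{0}}(I)$, and the Noetherian dimension formula now delivers $\dim(\mathds{k}[x^{\pm}]/P)=d$. Verifying that this descent can indeed be performed without altering the initial ideal is the delicate point, and is exactly where \cite{MS}*{Lemma 3.2.6} does the most careful bookkeeping.
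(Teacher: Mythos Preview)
The paper does not prove this statement itself; it is recorded as a ``Fact'' with a citation to \cite{MS}*{Lemma 3.2.6}, so there is no in-paper argument to compare against. Your outline---reduce to $\bm{w}=\bm{0}$ by a monomial twist, build the flat family $R[x^{\pm}]/(I\cap R[x^{\pm}])$ over the valuation ring, identify the special fiber with $\mathds{k}[x^{\pm}]/in_{\bm{0}}(I)$ via Fact~\ref{fact1}, and invoke the fiber-dimension formula after descending to a discrete subvaluation---is the standard flat-degeneration strategy and is in the spirit of the cited reference.

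One small slip worth fixing: you speak of ``a finite generating set of $J$'', but $J=I\cap R[x^{\pm}]$ need not be finitely generated when $R$ is non-Noetherian. The descent should instead start from a finite generating set of $I$ over $K$ (which exists since $K[x^{\pm}]$ is Noetherian), and one then checks that the initial ideal is unchanged after restricting to the subfield and re-extending. You already flag exactly this as the delicate bookkeeping step, so the overall plan is sound.
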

\begin{corollary}\label{finitenessofdef}
    Suppose $\Lambda(z)$ is a monic polynomial such that all its coefficients are non-zero. Then varieties $\Def_{qq}$ and $\Def_{QQ}$ are finite. In particular, the associated tropical varieties are finite.
\end{corollary}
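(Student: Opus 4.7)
The plan is to exploit Theorems \ref{tropicalqq} and \ref{tropicalQQ} together with Fact \ref{finiteness} to show that every irreducible component of $\Def_{qq}$ (respectively $\Def_{QQ}$) is zero-dimensional, and then to invoke the algebraic closedness of $L_c$ (recalled in Section \ref{basicsonpuiseuxseries}) to conclude finiteness. I treat $\Def_{qq}$ in detail; the $\Def_{QQ}$ case runs identically. Write $I = \langle f_1, \ldots, f_{m+n}\rangle \subset L_c[x_1^{\pm}, \ldots, x_m^{\pm}, y_1^{\pm}, \ldots, y_n^{\pm}]$ and decompose $\Def_{qq} = V(I)$ into finitely many irreducible components $X_1, \ldots, X_s$, with corresponding minimal associated primes $I_j \supseteq I$. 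Since $L_c$ is algebraically closed, a zero-dimensional closed subvariety of the algebraic torus $(L_c^{\times})^{m+n}$ is automatically finite, so it suffices to prove $\dim X_j = 0$ for each $j$.

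For fixed $j$, Theorem \ref{tropicalqq} gives $\trop(X_j) \subseteq \trop(\Def_{qq}) = \{(0,\ldots,0)\}$. If $\trop(X_j) = \emptyset$, then Fact \ref{ftotag}(iii) forces $X_j = \emptyset$; otherwise $\bm{0} \in \trop(X_j) \cap \Gamma_{val}^{m+n}$, and Fact \ref{finiteness} identifies $\dim X_j$ with the common dimension of the minimal associated primes of $in_{\bm{0}}(I_j) \subset \mathbb{C}[x_1^{\pm}, \ldots, y_n^{\pm}]$. The inclusion $I \subseteq I_j$ entails $\{in_{\bm{0}}(f) : f \in I\} \subseteq in_{\bm{0}}(I_j)$, hence by the definition of the initial ideal, $in_{\bm{0}}(I) \subseteq in_{\bm{0}}(I_j)$, and therefore $V(in_{\bm{0}}(I_j)) \subseteq V(in_{\bm{0}}(I))$. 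Thus it is enough to exhibit a finite subvariety of $(\mathbb{C}^{\times})^{m+n}$ containing $V(in_{\bm{0}}(I))$.

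To this end I compute the initial forms $in_{\bm{0}}(f_k)$: each $d_k \in \mathbb{C}^{\times}$ has valuation $0$ by hypothesis, the monomials of $e_k(x_1,\ldots,y_n)$ have integer coefficients of valuation $0$ at $\bm{w} = \bm{0}$, and every monomial of $t\,p_{k-1}(x_1,\ldots,y_n)$ acquires strictly positive valuation from the prefactor $t$. Hence $in_{\bm{0}}(f_k) = e_k(x_1,\ldots,y_n) - d_k$, so $V(in_{\bm{0}}(I)) \subseteq V\bigl(\langle e_1 - d_1, \ldots, e_{m+n} - d_{m+n}\rangle\bigr)$. The latter consists of tuples $(x_1,\ldots,x_m,y_1,\ldots,y_n) \in (\mathbb{C}^{\times})^{m+n}$ satisfying $\prod_i(z+x_i)\prod_j(z+y_j) = \Lambda(z)$, i.e.\ partitions of the multiset of roots of $\Lambda(z)$ into an $m$-part and an $n$-part -- a finite collection. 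So $\dim X_j = 0$ and $\Def_{qq}$ is finite. For $\Def_{QQ}$, expanding the prefactors $q^m/(q^m - tq^n) = 1 + O(t)$ and $tq^n/(q^m - tq^n) = O(t)$ gives $in_{\bm{0}}(g_k) = e_k(x_1/q,\ldots,x_m/q,y_1,\ldots,y_n) - d_k$, which again reduces to a finite partitioning problem for the (rescaled) roots of $\Lambda$. The "in particular" clause is immediate from the theorems, since $\{(0,\ldots,0)\}$ is a single point. The main point requiring care is the passage from the ideal $I$ to its irreducible components, but the needed inclusion $in_{\bm{0}}(I) \subseteq in_{\bm{0}}(I_j)$ is direct from the definition of the initial ideal and handles this cleanly.
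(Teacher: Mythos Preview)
Your proof is correct and follows essentially the same approach as the paper's: both reduce to showing each irreducible component of $\Def_{qq}$ is zero-dimensional via Fact~\ref{finiteness}, using Theorem~\ref{tropicalqq} to pin down $\bm{w}=\bm{0}$ and identifying $V(in_{\bm{0}}(I))$ with the (finite) solution set of the infinite $qq$-system. The paper states this tersely in two lines, whereas you have carefully unpacked the passage from $I$ to the minimal primes $I_j$ and the explicit computation of $in_{\bm{0}}(f_k)=e_k-d_k$; these details are exactly what the paper leaves implicit.
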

\begin{proof}
    We give an argument for $\Def_{qq}$. The argument for $\Def_{QQ}$ is similar. It is enough to show that each irreducible component of $\Def_{qq}$ has dimension $0$. The claim now follows from Theorem \ref{tropicalqq} and the fact that there are only finitely many solutions of the infinite $qq$-system. 
\end{proof}
\begin{remark}
    Corollary \ref{finitenessofdef} also follows in the differential (resp. difference) case from Theorem \ref{tropicalqq} (resp. Theorem \ref{tropicalQQ}) and the fact that a variety $X\subset T^{n}$ is a finite set of points if the tropical variety $\trop(X)$
is a finite set (see \cite{MS}*{Lemma 3.3.9}).
\end{remark}
We will also need the notion of the Hahn series in the proofs of Theorems \ref{analyticityqq} and \ref{analyticityQQ}, which we recall now.
\begin{definition} Let $\Gamma$ be an ordered group. 
    The field of Hahn series $\mathbb{C}[[t^{\Gamma}]]$ with coefficients in $\mathbb{C}$ and with value group $\Gamma$ consists of formal expressions of the form
    \[
    f=\sum_{e\in\Gamma}c_{e}t^{e}
    \]
    with $c_{e}\in\mathbb{C}$ such that the support $Supp(f):=\{e\in\Gamma:c_{e}\neq 0\}$ of $f$ is a well-ordered subset of $\Gamma$.
The usual operations of addition and multiplication makes $\mathbb{C}[[t^{\Gamma}]]$ into a field.
\end{definition}
 The natural valuation $$\nu:\mathbb{C}[[t^{\Gamma}]]\backslash \{0\}\rightarrow \Gamma,
 $$
 \[
 f\mapsto \min_{e\in Supp(f)}e
 \]
 makes $\mathbb{C}[[t^{\Gamma}]]$ into a valued field. We define the valuation of $0\in \mathbb{C}[[t^{\Gamma}]]$ as $+\infty$.

\begin{fact}\label{Hahnseries}
    \begin{enumerate}[label=(\roman*)]
        \item If $\Gamma$ is divisible, then $\mathbb{C}[[t^{\Gamma}]]$ is an algebraically closed field.
        \item In particular, if $\Gamma=\alpha_{1}\mathbb{Q}+\ldots+\alpha_{r}\mathbb{Q}$ for some $\alpha_{1},\ldots,\alpha_{r}\in\mathbb{R}$, then $\mathbb{C}[[t^{\Gamma}]]$ is algebraically closed.
    \end{enumerate}
\end{fact}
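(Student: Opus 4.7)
The plan is to prove $(i)$ by a Newton polygon plus transfinite Hensel-type construction, and then observe that $(ii)$ is an immediate consequence because every $\mathbb{Q}$-linear subspace of $\mathbb{R}$ is divisible as an abelian group.

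For $(i)$ I would fix a monic polynomial $P(X) = X^n + a_{n-1} X^{n-1} + \cdots + a_0 \in \mathbb{C}[[t^\Gamma]][X]$ of positive degree and construct a root transfinitely. First I would compute the Newton polygon of $P$, that is, the lower convex hull of the set $\{(i, \nu(a_i)) : a_i \neq 0\} \cup \{(n, 0)\}$ in $\mathbb{R}^2$. Each edge has a rational slope, and because $\Gamma$ is divisible these slopes give genuine elements of $\Gamma$. Pick the smallest slope $-\gamma_0$; the associated initial polynomial is a nonzero element of $\mathbb{C}[X]$ of positive degree, and since $\mathbb{C}$ is algebraically closed it admits a nonzero root $c_0 \in \mathbb{C}^\times$. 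Then I would substitute $X \mapsto c_0 t^{\gamma_0} + X_1$ into $P$ and repeat on the resulting polynomial in $X_1$, whose Newton polygon has a strictly larger minimum slope. Iterating transfinitely yields leading terms $c_\alpha t^{\gamma_\alpha}$ with strictly increasing exponents $\gamma_\alpha$, and the formal expression $x := \sum_\alpha c_\alpha t^{\gamma_\alpha}$ is the candidate root.

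The main obstacle is verifying that $x$ actually lies in $\mathbb{C}[[t^\Gamma]]$, that is, that the set of exponents $\{\gamma_\alpha\}$ is well-ordered. This requires the careful bookkeeping argument due essentially to Mac Lane: one bounds the exponents produced at each transfinite stage in terms of the supports of the original coefficients $a_0, \ldots, a_{n-1}$ together with the previously chosen $\gamma_\beta$, $\beta < \alpha$, using the fact that finite sums and finite products of well-ordered subsets of $\Gamma$ remain well-ordered. Once well-orderedness of $\{\gamma_\alpha\}$ is established, continuity of polynomial evaluation with respect to the natural topology on $\mathbb{C}[[t^\Gamma]]$ gives $P(x) = 0$, proving $(i)$.

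For $(ii)$, any subgroup of the form $\Gamma = \alpha_1 \mathbb{Q} + \cdots + \alpha_r \mathbb{Q} \subset \mathbb{R}$ is a $\mathbb{Q}$-vector space, and multiplication by any nonzero integer is an automorphism of such a group, so $\Gamma$ is divisible. Hence $(ii)$ reduces immediately to $(i)$.
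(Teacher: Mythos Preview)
The paper does not actually prove this statement: it is labeled as a \emph{Fact}, in keeping with the paper's convention of citing standard results (cf.\ Facts~\ref{fact1}, \ref{ftotag}, \ref{finiteness}, \ref{dimensiontheorem}) without proof. So there is no proof in the paper to compare against; you are supplying an argument where the authors chose simply to quote the result.

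Your outline is the classical one, essentially Mac Lane's theorem that the Hahn series field $k[[t^{\Gamma}]]$ over an algebraically closed $k$ of characteristic zero with $\Gamma$ divisible is algebraically closed, proved via a transfinite Newton--Puiseux construction. The strategy is sound and the identification of the well-orderedness of the support as the crux is exactly right. Two small comments. First, the phrase ``each edge has a rational slope'' is misleading: the slopes are of the form $(\nu(a_j)-\nu(a_i))/(j-i)$ with numerator in $\Gamma$ and denominator a positive integer, so they lie in $\Gamma$ precisely because $\Gamma$ is divisible --- they need not be rational numbers. Second, your treatment of~(ii) is correct and complete: a $\mathbb{Q}$-subspace of $\mathbb{R}$ is trivially divisible, so (ii) is immediate from (i).
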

\section{\texorpdfstring{$qq$}{qq}-systems}
\subsection{Proofs}\label{proofqq}
In this section, we give proofs of Theorems \ref{qqforgeneralg}, \ref{analyticityqq}, \ref{sl2analyticity} and \ref{tropicalqq}.

We first give the proof of Theorem \ref{sl2analyticity}. Then we will sketch the proofs of Theorems \ref{qqforgeneralg} and \ref{analyticityqq}.
\begin{proof}
    (of Theorem \ref{sl2analyticity})
For ease of notation, let us write $\big(x_{1}(0),\ldots,x_{m}(0),y_{1}(0),\ldots,y_{n}(0)\big)=(b_{1},\ldots,b_{m+n})$. Let us suppose that $l$ of the $b_i$'s are distinct, say they are $b_{j_{1}},\ldots,b_{j_{l}}$ and the remaining ones $b_{i_{1}},\ldots,b_{i_{m+n-l}}$ are repeated.

Recall the polynomials $f_{k}$, $k=1,\ldots,m+n$ from Section \ref{statementfordeformationofqq-systems}. Let us define the following map:
\[
\phi:\mathbb{C}^{m}\times\mathbb{C}^{n}\times\mathbb{C}\rightarrow\mathbb{C}^{m+n}
\]
\[
(x_{1},\ldots,x_{m},y_{1},\ldots,y_{n},t)\mapsto(f_{1},\ldots,f_{m+n})
\]
Then $\phi(b_{1},\ldots,b_{m+n},0)=(0\ldots,0)$ as $(b_{1},\ldots,b_{m+n})$ is a solution of the infinite $qq$-system. The derivative matrix of $\phi$ at $\underline{a}=(b_{1},\ldots,b_{m+n},0)$ about $x_{1},\ldots,x_{m}$, $y_{1},\ldots,y_{n}$ is given by
\[
A=
\begin{bmatrix}
1
&
1
&
\cdots
&
1
\\
e_{1}\big(b_{2},b_{3},\ldots,b_{m+n}\big)
&
e_{1}\big(b_{1},b_{3},\ldots,b_{m+n}\big)
&
\cdots
&
e_{1}\big(b_{1},b_{2},\ldots,b_{m+n-1}\big)
\\
\vdots & \vdots & \ddots & \vdots
\\
e_{m+n-1}\big(b_{2},b_{3},\ldots,b_{m+n}\big)
&
e_{m+n-1}\big(b_{1},b_{3},\ldots,b_{m+n}\big)
&
\cdots
&
e_{m+n-1}\big(b_{1},b_{2},\ldots,b_{m+n-1}\big)
\end{bmatrix}
\]
We need a lemma.
\begin{lemma}\label{rank}
    $\rank(A)=l$.
\end{lemma}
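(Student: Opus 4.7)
The plan is to identify each column of $A$ with the coefficient vector of a concrete polynomial, which turns the rank computation into a linear-independence statement for rational functions. Setting $P(z) := \prod_{j=1}^{m+n}(z+b_j)$, the standard generating-function identity
\[
\prod_{j \neq i}(z+b_j) \;=\; \sum_{k=0}^{m+n-1} e_{k}\bigl(b_{1},\ldots,\widehat{b_{i}},\ldots,b_{m+n}\bigr)\, z^{\,m+n-1-k}
\]
says that the $i$-th column of $A$ records, in the monomial basis, the coefficients of $P_{i}(z) := P(z)/(z+b_{i})$. Thus the column map of $A$ factors through the inclusion $\mathrm{span}(P_{1},\ldots,P_{m+n}) \hookrightarrow \mathbb{C}[z]_{<m+n}$, and $\rank(A)$ equals the dimension of this span.

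Since $P_{i} = P_{i'}$ whenever $b_{i} = b_{i'}$, the span is already $\mathrm{span}\bigl(P(z)/(z+b_{j_{s}}) : s = 1,\ldots,l\bigr)$, giving at once $\rank(A) \leq l$. For the reverse inequality I would show these $l$ polynomials are linearly independent: from $\sum_{s=1}^{l} c_{s}\,P(z)/(z+b_{j_{s}}) = 0$, dividing by $P(z)$ in the function field $\mathbb{C}(z)$ yields the partial-fraction relation $\sum_{s} c_{s}/(z+b_{j_{s}}) = 0$, and multiplying through by $(z+b_{j_{s}})$ and specializing at $z = -b_{j_{s}}$ (equivalently, taking residues) forces $c_{s} = 0$ for each $s$. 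Combined with the upper bound, this gives $\rank(A) = l$.

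No step here is a real obstacle; the whole argument pivots on translating the elementary-symmetric entries of $A$ into coefficients of $P(z)/(z+b_{i})$, after which the linear independence reduces to the elementary fact that simple rational functions with distinct poles are linearly independent over $\mathbb{C}$. The only minor point worth flagging is that $P(z)/(z+b_{j_{s}})$ remains a polynomial of degree $m+n-1$ even when $b_{j_{s}}$ is a repeated root of $P$, since $(z+b_{j_{s}})$ divides $P$ with multiplicity at least one, so the passage to the list of distinct values $b_{j_{1}},\ldots,b_{j_{l}}$ introduces no genuine division of polynomials.
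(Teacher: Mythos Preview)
Your proof is correct and follows essentially the same approach as the paper: both identify the $i$-th column of $A$ with the coefficient vector of $\Lambda(z)/(z+b_i)$ (your $P(z)$ coincides with $\Lambda(z)$ since $(b_1,\ldots,b_{m+n})$ solves the infinite $qq$-system), reducing the rank to the number of distinct such polynomials. You supply a more explicit argument for the linear independence of the $l$ distinct quotients via partial fractions and residues, whereas the paper simply asserts it.
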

\begin{proof}
    The entries $a_{i,j}$ of $A$ are the coefficients of $z^{m+n-i}$ in the polynomial $\Lambda(z)/(z+b_{j})$. It follows that the columns $C_{j_{1}},\ldots,C_{j_{l}}$ are linearly independent and this is a maximal linearly independent set as any other column of $A$ is one of these. 
\end{proof}
Let us complete the set of columns $\{C_{j_{1}},\ldots,C_{j_{l}}\}$ in Lemma \ref{rank} to a basis of $\mathbb{C}^{m+n}$, say $\{C_{j_{1}},\ldots,C_{j_{l}},$ $\tilde{C}_{1},\ldots,\tilde{C}_{m+n-l}\}$ and let the $k$-th coordinate of $\tilde{C}_{i}$ be denoted by $\tilde{c}_{k,i}$, $1\leq i\leq m+n-l$ and $1\leq k\leq m+n$.

For each $1\leq i\leq m+n-l$, let us introduce a new variable $s_{i}$ and consider the new polynomials $h_{k}$, $1\leq k\leq m+n$, which are defined as:
\[ 
h_{k}:=f_{k}+\tilde{c}_{k,1}s_{1}+\ldots+\tilde{c}_{k,m+n-l}s_{m+n-l}
\]
and define the map $\tilde{\phi}$ as:
\[
\tilde{\phi}:\mathbb{C}^{m}\times\mathbb{C}^{n}\times\mathbb{C}\times\mathbb{C}^{m+n-l}\rightarrow\mathbb{C}^{m+n}
\]
\[
(x_{1},\ldots,x_{m},y_{1},\ldots,y_{n},t,s_{1},\ldots,s_{m+n-l})\mapsto(h_{1},\ldots,h_{m+n})
\]
Let $\underline{\tilde{a}}=(b_{1},\ldots,b_{m+n},0,0,\ldots,0)$. Then $\tilde{\phi}(\underline{\tilde{a}})=(0,\ldots,0)$ and the derivative matrix of $\tilde{\phi}$ at $\underline{\tilde{a}}$ about the $j_{1}$-st, $\ldots$, $j_{l}$-th variables and $s_{1},\ldots, s_{m+n-l}$ has rank $m+n$.

By the implicit function theorem, there exists a polydisc $D_{1}$ around $(b_{i_{1}},\ldots,b_{i_{m+n-l}},0)$, a polydisc $D_{2}$ around $(b_{j_{1}},\ldots,b_{j_{l}},0,\ldots,0)$ and analytic functions
\[
\gamma_{i}:D_{1}\rightarrow\mathbb{C},\quad 1\leq i\leq m+n
\]
such that
\[
\gamma:=(\gamma_{1},\ldots,\gamma_{m+n}):D_{1}\rightarrow D_{2}\subset\mathbb{C}^{m+n}
\]
and 
$\tilde{\phi}$ is $0$ at point in $D_{1}\times D_{2}$ if and only if it lies on the graph of $\gamma$.

Now let us consider the variety $X:=V(\{h_{1},\ldots,h_{m+n}\})\subset\mathbb{C}^{2m+2n+1-l}$, that is, we are now viewing $h_{i}$'s as polynomials in the ring $\mathbb{C}[x_{1},\ldots,x_{m},y_{1},\ldots,y_{n},t,s_{1},\ldots,s_{m+n-l}]$.
Then by the above argument, there exists an irreducible component $Y$ of $X$ containing $\underline{\tilde{a}}$ of dimension $\geq m+n-l+1$.

Consider the subvariety $Z:=V(\{s_{1},\ldots,s_{m+n-l}\})\subset\mathbb{C}^{2m+2n+1-l}$. Then every irreducible component of $Y\cap Z$ has dimension $\geq 1$ by the following result (\cite{Har}*{Proposition 1.7, Chapter I}), known as the affine dimension theorem:
\begin{fact}\label{dimensiontheorem}
    Let $Y$, $Z$ be irreducible varieties of dimensions $r$, $s$ in $\mathbb{A}^{n}$. Then every irreducible component $W$ of $Y\cap Z$ has dimension $\geq r+s-n$.
\end{fact}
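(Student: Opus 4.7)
The plan is to use the standard reduction to the diagonal combined with an iterated application of Krull's Hauptidealsatz. First, I would consider the product $Y \times Z \subset \mathbb{A}^n \times \mathbb{A}^n = \mathbb{A}^{2n}$, which is irreducible of dimension $r + s$ (using that the product of two irreducible affine varieties over an algebraically closed field is irreducible, and that dimensions of such products add). Let $\Delta = \{(x,x) : x \in \mathbb{A}^n\} \subset \mathbb{A}^{2n}$ be the diagonal. The key geometric observation is that $\Delta$ is cut out globally by the $n$ linear equations $f_i(x,y) = x_i - y_i$, and the first projection restricts to an isomorphism $(Y \times Z) \cap \Delta \xrightarrow{\sim} Y \cap Z$, which moreover carries irreducible components to irreducible components preserving dimensions. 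Hence it suffices to bound the dimensions of irreducible components of $(Y \times Z) \cap V(f_1,\ldots,f_n)$ from below by $r + s - n$.

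Next I would proceed by induction on $n$, cutting one hypersurface at a time, using the following standard consequence of Krull's principal ideal theorem: if $X$ is an irreducible affine variety of dimension $d$ and $f$ is a regular function on $X$ that is not a unit, then every irreducible component of $X \cap V(f)$ has dimension at least $d - 1$ (equal to $d - 1$ if $f$ does not vanish identically on $X$, and equal to $d$ otherwise). Starting from $X_0 := Y \times Z$ of dimension $r + s$, pick an irreducible component $X_1$ of $X_0 \cap V(f_1)$ containing a chosen irreducible component $W'$ of $(Y \times Z) \cap \Delta$; it has dimension $\geq r + s - 1$. Then choose an irreducible component $X_2$ of $X_1 \cap V(f_2)$ containing $W'$, of dimension $\geq r + s - 2$, and so on. After $n$ steps one obtains a chain $X_n \supseteq W'$ with $\dim X_n \geq r + s - n$, and since $W'$ is an irreducible component of $X_n = X_{n-1} \cap V(f_n)$ and of $(Y\times Z)\cap V(f_1,\ldots,f_n)$, we conclude $\dim W' \geq r + s - n$, as desired.

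The main obstacle is bookkeeping in the inductive step: one must verify that every irreducible component $W$ of the final intersection really does arise as the terminal member of such a descending chain of components $X_0 \supseteq X_1 \supseteq \cdots \supseteq X_n \supseteq W$. This reduces to the basic fact that for any ambient irreducible variety $X$ and ideal $(f)$, every component of $X \cap V(f)$ is contained in $X$ and is a component of some component of the intersection with fewer equations; formally, one shows by induction on $k$ that every irreducible component of $(Y \times Z) \cap V(f_1,\ldots,f_k)$ lies in some irreducible component of $(Y \times Z) \cap V(f_1,\ldots,f_{k-1})$, and applies Hauptidealsatz on the latter. The underlying inputs, i.e.\ irreducibility and dimension of $Y \times Z$ and Krull's theorem for coordinate rings of affine varieties, are classical, so once the inductive framework is set up the bound $\dim W \geq r + s - n$ is immediate.
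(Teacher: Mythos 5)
The paper does not supply its own proof of this Fact; it is cited directly to Hartshorne, Proposition~I.7.1, whose argument is precisely the reduction-to-the-diagonal followed by repeated application of Krull's Hauptidealsatz that you describe. Your proposal is correct and follows the same strategy; the bookkeeping about chains of components that you flag as the main obstacle is handled correctly, since the final component $W'$ must coincide with the terminal $X_n$ of your descending chain by maximality, giving $\dim W' = \dim X_n \geq r+s-n$.
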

Since $\underline{\tilde{a}}\in Y\cap Z$, let us take an irreducible component $C$ containing $\underline{\tilde{a}}$, which has dimension $\geq 1$ by Fact \ref{dimensiontheorem}. Consider the projection of $C$ onto the $t$-coordinate:
\[
pr:C\subset\mathbb{C}^{2m+2n+1-l}
\rightarrow
\mathbb{C}.
\]
We have the following lemma.
\begin{lemma}\label{path}
 The image of $pr$, $pr(C)$ contains an open set (in analytic topology) around $0$. 
\end{lemma}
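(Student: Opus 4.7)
The plan is to show that the algebraic morphism $pr: C \to \mathbb{C}$ is dominant, then apply Chevalley's theorem to deduce that $pr(C)$ is cofinite in $\mathbb{C}$, and finally observe that $0 = pr(\underline{\tilde{a}}) \in pr(C)$ so that a small Euclidean disk around $0$ must lie in $pr(C)$, giving the required analytic neighborhood.

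The key step I would carry out first is to verify that $C$ is not contained in the hyperplane $\{t = 0\}$. Since all $s_j$ vanish on $C \subset Z$ and $h_k = f_k + \sum_j \tilde{c}_{k,j} s_j$, the relation $h_k|_C = f_k|_C$ forces $C \subset V(\{f_1, \ldots, f_{m+n}\}) \times \{0\}$; restricting further to $t = 0$ one has $f_k|_{t=0} = e_k(x,y) - d_k$, so any subset of $C$ with $t = 0$ projects into the classical solution variety $V(\{e_k - d_k\})$ of the infinite $qq$-system. By hypothesis, $(b_1, \ldots, b_{m+n})$ is an isolated point, i.e.\ a zero-dimensional irreducible component of this variety. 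Since $C$ is irreducible of dimension at least one by Fact \ref{dimensiontheorem} and contains $\underline{\tilde{a}}$, its projection to $(x, y)$-coordinates would, in the case $C \subset \{t=0\}$, be a positive-dimensional irreducible subvariety of $V(\{e_k - d_k\})$ passing through the isolated point $(b_1, \ldots, b_{m+n})$, which is impossible.

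Once $C \not\subset \{t = 0\}$ is established, I would note that the Zariski closure of $pr(C)$ is an irreducible Zariski-closed subset of $\mathbb{C}$ properly containing $\{0\}$, hence all of $\mathbb{C}$, so $pr$ is dominant. By Chevalley's theorem the image $pr(C)$ is constructible in $\mathbb{C}$, and any Zariski-dense constructible subset of $\mathbb{C}$ is cofinite: $pr(C) = \mathbb{C} \setminus F$ for some finite set $F$. Since $0 \in pr(C)$, the point $0$ lies outside $F$, and any Euclidean disk around $0$ disjoint from $F$ is contained in $pr(C)$.

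The only genuine obstacle is the first step, where the isolated-solution hypothesis really enters: one must rule out that $C$ could fit entirely inside the classical slice $\{t = 0\}$. The remainder of the argument is formal algebraic geometry applied to the one-dimensional target $\mathbb{C}$; no analytic branch analysis or open-mapping theorem is needed, because constructibility plus Zariski density already forces cofiniteness in a curve.
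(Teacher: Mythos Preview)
Your argument is correct and follows essentially the same route as the paper: the paper's one-line proof observes that $pr(C)$ is a connected constructible subset of $\mathbb{C}$, that $pr^{-1}(\{0\})$ is finite, and that $\dim(C)\geq 1$, which together force $pr(C)$ to be cofinite and hence to contain a Euclidean neighborhood of $0$. Your version makes explicit the reason behind the finiteness of $pr^{-1}(\{0\})$ (namely the isolatedness of the classical solution, which in the $\mathfrak{sl}_2$ case is automatic) and trades the connectedness observation for the equivalent statement that $pr$ is dominant, but the substance is the same.
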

\begin{proof}
This follows by observing that $pr(C)$ is a connected constructible subset of $\mathbb{C}$, $pr^{-1}(\{0\})$ is finite and $\dim(C)\geq 1$.
\end{proof}
Note that so far it is not clear that $x_{i}$'s and $y_{j}$'s are analytic functions of $t$ alone.
This is the content of the next proposition.
\begin{proposition}\label{limitingargument}
 There exists $\epsilon>0$ and analytic functions $x_{1}(t),\ldots,x_{m}(t),y_{1}(t),$ $\ldots,y_{n}(t)$ that solve the finite $qq$-system \eqref{sl2qq} with parameter $t$ for all $t\in\mathbb{C}$ such that $|t|<\epsilon$.  
\end{proposition}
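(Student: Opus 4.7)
The plan is to upgrade the existence statement from Lemma \ref{path} to an analytic parametrization of the curve $C$ in terms of $t$.

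First, I would pin down the dimension of $C$ exactly. Since $\underline{\tilde{a}}$ is an isolated solution of the infinite $qq$-system \eqref{sl2infiniteqq}, the fibre $pr^{-1}(0) \cap C$ equals $\{\underline{\tilde{a}}\}$ locally. By upper semi-continuity of fibre dimension for $pr \colon C \to \mathbb{C}$, the generic fibres are also zero-dimensional, so $pr|_C$ is generically finite. Combined with $\dim pr(C) = 1$ from Lemma \ref{path}, this forces $\dim C = 1$. Hence $C$ is a $1$-dimensional complex algebraic variety near $\underline{\tilde{a}}$ and $pr|_C$ is a finite dominant map onto a neighborhood of $0 \in \mathbb{C}$.

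Next, I would parametrize $C$ near $\underline{\tilde{a}}$ via its normalization $\nu \colon \tilde{C} \to C$. The finite set $\nu^{-1}(\underline{\tilde{a}}) = \{p_1, \ldots, p_r\}$ enumerates the analytic branches of $C$ at $\underline{\tilde{a}}$. Around each $p_\alpha$, $\tilde{C}$ is a holomorphic disc with coordinate $\tau_\alpha$, and pulling back the ambient coordinates yields analytic functions $x_i^\alpha(\tau_\alpha), y_j^\alpha(\tau_\alpha), t^\alpha(\tau_\alpha)$ with $t^\alpha(\tau_\alpha) = c_\alpha \tau_\alpha^{k_\alpha}(1 + O(\tau_\alpha))$ for some $c_\alpha \neq 0$ and integer $k_\alpha \geq 1$. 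The $\alpha$-th branch yields genuine analytic functions in $t$ precisely when $k_\alpha = 1$; for $k_\alpha \geq 2$ one only obtains a Puiseux series in $t^{1/k_\alpha}$.

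The main obstacle is therefore to show $k_\alpha = 1$ for every branch, and I expect this to be the hardest step. My plan is a tangent-cone analysis: $k_\alpha = 1$ is equivalent to the tangent line of the $\alpha$-th branch having nonzero $dt$-component at $\underline{\tilde{a}}$. Using the explicit form $f_k = e_k(v) + t\, p_{k-1}(v) - d_k$, a careful linearization (with the rank-$l$ Jacobian from Lemma \ref{rank}) together with a Newton-polygon analysis of the higher order terms, combined with the isolation of $\underline{\tilde{a}}$ in the infinite system and the hypothesis $\Lambda(0) \neq 0$ (which keeps all $b_i$ nonzero), should rule out branches tangent to $\{t = 0\}$ and hence force $k_\alpha = 1$ throughout. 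Assembling the resulting branch-by-branch analytic functions gives the desired $x_i(t), y_j(t)$. The ``moreover'' clause of Theorem \ref{sl2analyticity} then follows by a degree count: summing the analytic branches produced this way over all isolated solutions of the infinite $qq$-system matches the generic finite-system solution count, so every solution of the finite system for sufficiently small $t$ arises as an analytic deformation of some isolated infinite solution.
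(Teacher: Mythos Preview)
Your approach and the paper's diverge sharply at the point where analyticity is extracted. The paper does \emph{not} normalize $C$ or study its branches. Instead it passes to tropical geometry: it considers the ideal $I=(f_1,\ldots,f_{m+n})$ over the field $L_c$ of convergent Puiseux series and shows that $\bm{0}\in\trop(\Def_{qq})$ by proving $in_{\bm{0}}(I)\neq\langle 1\rangle$. This is done by contradiction: if $1$ lay in the initial ideal, Fact~\ref{fact1} would give a relation $\sum_i g_i f_i = p_1 + t^{v}p_2$ with $p_1$ a nonzero monomial; evaluating along the sequence of points in $C$ supplied by Lemma~\ref{path} and letting $t\to 0$ forces $p_1(b_1,\ldots,b_{m+n})=0$, contradicting $\Lambda(0)\neq 0$. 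The Fundamental Theorem (Fact~\ref{ftotag}) then produces a point of $\Def_{qq}$ over $L_c$ with all valuations $0$. In the paper's usage, ``analytic'' means precisely ``given by a convergent Puiseux series'' (compare the wording of Theorem~\ref{analyticityqq}), so this finishes the argument.

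Your normalization argument is correct through the production of Puiseux parametrizations of the branches of $C$, and that already suffices for the proposition as stated in the paper. The step you single out as hardest --- forcing $k_\alpha=1$ so that the series are honest power series in $t$ --- is neither needed nor claimed: the paper is content with Puiseux series, and explicitly remarks (end of Section~\ref{Motivation}) that genuine power series should be expected only under additional nondegeneracy hypotheses. Your sketched tangent-cone/Newton-polygon argument for $k_\alpha=1$ is therefore aiming beyond the target, and you have not actually carried it out; you should simply drop that step. What remains is a perfectly valid classical route that bypasses tropical geometry altogether. The trade-off is that the paper's tropical formulation is what lets the later results (Theorems~\ref{analyticityqq}, \ref{tropicalqq}, \ref{analyticityQQ}, \ref{tropicalQQ}) go through uniformly, and in particular feeds the algorithmic applications in Sections~\ref{applicationsl2} and~\ref{applicationspinchain}.
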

\begin{proof}
    Recall the variety $\Def_{qq}=V\big(\{f_{1},\ldots,f_{m+n}\}\big)\subset (L_{c}^{\times})^{m+n}$ from Section \ref{statementfordeformationofqq-systems}. It suffices to show that for $\bm{0}=(0,\ldots,0)$, we have $in_{\bm{0}}(I)\neq\langle1\rangle$, where $I$ is the ideal generated by $f_{1},\ldots,f_{m+n}$ in the ring of Laurent polynomials $\mathcal{R}:=L_{c}[x_{1}^{\pm},\ldots,x_{m}^{\pm},y_{1}^{\pm},\ldots,y_{n}^{\pm}]$. Suppose that is not the case, then by Fact \ref{fact1} there exists $g_{i}\in\mathcal{R}$, $1\leq i\leq m+n$, $p_{1},p_{2}\in\mathcal{R}$ such that $p_{1}$ is a non-zero monomial in $x_{1}^{\pm},\ldots,x_{m}^{\pm},y_{1}^{\pm},\ldots,y_{n}^{\pm}$ and
\begin{equation}\label{equation}
g_{1}f_{1}+\ldots+g_{m+n}f_{m+n}=p_{1}+t^{v}p_{2},  
\end{equation}
where $v\in\mathbb{Q}_{>0}$.
Note that on $C$, the $f_{i}$'s can be replaced by $h_{i}$'s in the above equation. By Lemma \ref{path}, there exists a sequence $(t_{k})_{k\in\mathbb{N}}$ converging to $0$ and a sequence $(c_{k})_{k\in\mathbb{N}}$ in $C$ that is a lift of $(t_{k})_{k\in\mathbb{N}}$ under $pr$ such that the LHS of \eqref{equation} is $0$ when evaluated at $c_{k}$'s. By the continuity of $\gamma$, $p_{1}(b_{1},\ldots,b_{m+n})=0$, which is a contradiction to our assumption about the roots of $\Lambda(z)$. This completes the proof of the proposition and Theorem \ref{sl2analyticity}.
\end{proof}
\end{proof}
\begin{proof}
(of Theorem \ref{qqforgeneralg})
    The argument in this case is similar to the proof in the $\mathfrak{sl}_2$ case. Here we work with $r$ parameters $t_{1},\ldots,t_{r}$ instead of a single parameter $t$ and $pr:C\rightarrow\mathbb{C}^{r}$. The only changes are in the proof of Lemma \ref{path}, where we now use upper semi-continuity of the dimensions of the fibres of morphisms of varieties, and $pr^{-1}(0,\ldots,0)$ is finite (atleast locally around the solution $\{q^{i}_{\pm}(z)\}_{i=1}^{r}$) due to the assumptions on $q^{i}_{\pm}(z)$'s, that is, $q^{i}_{\pm}(z)$ is an isolated solution of an infinite $qq$-system \eqref{infiniteqqsystem}.
\end{proof}
We follow \cite{Yu} for the next proof.
\begin{proof}
    (of Theorem \ref{analyticityqq})
    Choose $\alpha_{1},\ldots,\alpha_{r}\in\mathbb{R}_{>0}$ that are linearly independent over $\mathbb{Q}$. Let $\Gamma=\alpha_{1}\mathbb{Q}+\ldots+\alpha_{r}\mathbb{Q}$. We can map the ring of multivariate Puiseux series in $t_{1},\ldots,t_{r}$ injectively into $\mathbb{C}[[t^{\Gamma}]]$ via:
    \[
    t_{i}\mapsto t^{\alpha_{i}},\quad 1\leq i\leq r.
    \]
    By Fact \ref{Hahnseries}, $\mathbb{C}[[t^{\Gamma}]]$ is an algebraically closed valued field. Applying the fundamental theorem of tropical geometry to $\mathbb{C}[[t^{\Gamma}]]$ in the argument of Proposition \ref{limitingargument}, we see that there is a solution with valuation $0$ in  $\mathbb{C}[[t^{\Gamma}]]$. The theorem now follows by lifting this Hahn series-valued solution back to the ring of multivariate Puiseux series (the fact that the exponents of this lift have bounded denominators follow from \cite{AI}*{Theorem 1}: the coefficients of the equations defining the $qq$-system \eqref{finiteqq} have coefficients in the field of $\omega$-positive Puiseux series \cite{AI}*{Section 2} for $\omega=(\alpha_{1},\ldots,\alpha_{r})$ and the field of $\omega$-positive series is algebraically closed \cite{AI}*{Theorem 1}).
\end{proof}
\begin{proof}
(of Theorem \ref{tropicalqq})
  Clearly, we have
  \[
  \trop(\Def_{qq})\subset\bigcap_{i=1}^{m+n}\trop(V(f_{i})).
  \]
  By Theorem \ref{analyticityqq}, $\bm{0}\in\trop(\Def_{qq})$. 
  We will now show $\bm{0}$ is the only element of the RHS in the above set containment. Let $\bm{w}=(w_{1},\ldots,w_{m+n})\in\mathbb{R}^{m+n}$ be an arbitrary element of RHS. Since $\bm{w}\in\trop(V(f_{1}))$, using Remark \ref{kapranov}$(ii)$ and our assumption that all the coefficients $d_{i}$'s of $\Lambda(z)$ are non-zero,  we have the following possibilities:
  \begin{enumerate}[label=(\roman*)]
      \item $w_{i_{0}}=0$ for some $i_{0}$ and $w_{i}\geq 0$ for other $i$'s, \text{or}
      \item $w_{i_{0}}=w_{j_{0}}<0$ for some $i_{0}\neq j_{0}$ and $w_{i}\geq w_{i_{0}}$ for other $i$'s.
  \end{enumerate}
 In case $(i)$ let us consider $\trop(V(f_{2}))$, we must have $w_{i_{0}}+w_{j}=0$ for some $j\neq i_{0}$. Thus, $w_{j}=0$ in this case. Now iterating this procedure with other $f_{k}$'s, it is clear that after the $k$-th step, atleast $k$ coordinates of $\bm{w}$ are zero. This gives, $\bm{w}=\bm{0}$.

 In case $(ii)$ let us consider $\trop(V(f_{2}))$, then we must have $w_{i_{0}}+w_{j}=w_{i_{0}}+w_{j_{0}}$ for some $j\neq i_{0}, j_{0}$. Thus, $w_{j}=w_{i_{0}}<0$ in this case. Now iterating this procedure with other $f_{k}$'s, it is clear that after the $k$-th step, $1\leq k\leq m+n-1$, atleast $k+1$ coordinates of $\bm{w}$ are negative. In particular, after the $(m+n-1)$-th step, all entries of $\bm{w}$ are strictly negative. This gives a contradiction since $in_{\bm{w}}(f_{m+n})$ is a non-monomial by our assumption on $\bm{w}$. This completes the proof of Theorem \ref{tropicalqq}.
\end{proof}
\subsection{Inhomogeneous Gaudin model} \label{applicationsl2}
In this section, we give an application of Theorem \ref{sl2analyticity}, Theorem \ref{tropicalqq} to the Bethe ansatz equations of the $\mathfrak{sl}_2$ inhomogeneous Gaudin model (\cite{FFR_Opers},\cite{FFL_irreg}).
\begin{definition}
    Let $y_{1}(z),\ldots, y_{r}(z)$ and $\Lambda_{1}(z),\ldots,\Lambda_{r}(z)$ be a collection of non-zero polynomials. We say the collection $\{y_{1}(z),\ldots,y_{r}(z)\}$ is non-degenerate with respect to $\Lambda_{1}(z),\ldots,\Lambda_{r}(z)$ if the following conditions are satisfied:
    \begin{enumerate}[label=(\roman*)]
        \item $y_{i}(z)$ has no multiple zeros, $i=1,\ldots,r$;
     \item the zeros of $\Lambda_{i}(z)$ are different from the zeros of $y_{i}(z)$, $i=1,\ldots,r$;
    \item for $i,j=1,\ldots,r$ such that $i\neq j$ and $a_{ij}\neq 0$, the zeros of $y_{i}(z)$ and $y_{j}(z)$ are distinct. 
    \end{enumerate}
    When $\Lambda_{i}(z)$'s are clear from the context, we will simply say that $\{y_{1}(z),\ldots,y_{r}(z)\}$ is non-degenerate.
\end{definition}
We say a solution $\{q^{i}_{+}(z),q^{i}_{-}(z)\}_{i=1,\ldots,r}$ of the $qq$-system \eqref{qqsystem} is non-degenerate if it's positive part is non-degenerate, that is, $\{q^{i}_{+}(z)\}_{i=1,\ldots,r}$ is non-degenerate.

Let us recall the Bethe ansatz equations for the inhomogeneous Gaudin model. 
Denote the distinct roots of $\Lambda_{i}(z)$ by $z_{1}^{i},\ldots,z_{N_{i}}^{i}$, $i=1,\ldots,r$. Then using the multiplicities of the roots of $\Lambda_{i}(z)$'s we can define certain dominant integral coweights $\check{\mathrm{\lambda}}_{k}$ as:
    \[
    \Lambda_{i}(z)=\prod_{k=1}^{N_i}(z-z_{k}^{i})^{\langle\alpha_{i},\check{\mathrm{\lambda}}_{k}\rangle}
    \]
    Let $N:=\text{max}\{N_{i}:i=1,\ldots,r\}$ and let $V_{\check{\mathrm{\lambda}}_{1}},\ldots,V_{\check{\mathrm{\lambda}}_{N}}$ be the irreducible representations corresponding to the dominant integral coweights $\check{\mathrm{\lambda}}_{1},\ldots,\check{\mathrm{\lambda}}_{N}$.
\begin{definition}
    For each $i=1,\ldots,r$, fix positive integers $m_{1},\ldots,m_{r}$. The Bethe ansatz equations for the inhomogeneous Gaudin model corresponding to $Z^{H}\in\mathfrak{h}$, the representation $\otimes_{j=1}^{N}V_{\check{\mathrm{\lambda}}_{j}}$ and the coweight $\sum \check{\mathrm{\lambda}}_{j}-\sum m_{i}\check{\mathrm{\alpha}}_{i}$
    are the following equations in the unknowns $w_{l}^{i}$:
    \begin{equation}\label{betheqqg}
    \langle\alpha_{i},Z^H\rangle
    +
\sum_{j=1}^{N_{i}}\frac{\langle\alpha_{i},\check{\mathrm{\lambda}}_{j}\rangle}{w_{l}^{i}-z_{j}^{i}}
-
\sum_{(j,s)\neq(i,l)}\frac{a_{ji}}{w_{l}^{i}-w^{j}_{s}}=0,\quad i=1,\ldots,r, 
\quad
l=1,\ldots,m_{i}.
    \end{equation}
\end{definition}
\begin{remark}
    Any solution of the above Bethe ansatz equations should have that no $w_{l}^{i}$ is a root of $\Lambda_{i}(z)$ and if $(j,s)\neq(i,l)$ such that $a_{ji}\neq 0$, then $w_{l}^{i}\neq w^{j}_{s}$. 
\end{remark}
\begin{example}
    Let $\mathfrak{g}=\mathfrak{sl}_{2}$, $Z^{H}=\text{diag}(\zeta,-\zeta)$ and $\Lambda(z)=\prod_{k=1}^{N}(z-z_{k})^{n_{k}}$. Then the Bethe ansatz equations are
    \begin{equation}\label{bethesl2}
    2\zeta+\sum_{j=1}^{N}\frac{n_{j}}{w_{l}-z_{j}}
-
\sum_{s\neq l}\frac{2}{w_{l}-w_{s}}=0,\quad 
\quad
l=1,\ldots,m.
    \end{equation}
\end{example}
It is known (\cite{BSZ}*{Theorem 5.11}) that there is a surjective map from the set of non-degenerate polynomial solutions of the $qq$-system \eqref{qqsystem} to the set of solutions of Bethe ansatz equations, which takes a non-degenerate solution $\{q^{i}_{+}(z),q^{i}_{-}(z)\}_{i=1,\ldots,r}$ to $\{w_{l}^{i}\}_{i=1,\ldots,r,\text{ }l=1,\ldots,\text{deg}(q_{+}^{i})}$, where $w_{l}^{i}$ are the roots of $q_{+}^{i}(z)$.

For $\mathfrak{sl}_2$, as an application of Theorem \ref{sl2analyticity}, we can algorithmically solve the Bethe equations \eqref{bethesl2} for small enough parameter $t$. To state this result, we recall the following algorithm, which lifts a point of the tropical variety to a point of the variety up to any given order of $t$.
\begin{algorithm}
    (\cite{JMM}*{Algorithms 3.8 and 4.8}) Consider the polynomial ring $\mathbb{C}[t,x_{1},\ldots,x_{n}]$ and let $q_{i}\in\mathbb{C}[t,x_{1},\ldots,x_{n}]$, $1\leq i\leq s$. Let $J$ be the ideal generated by $q_{i}$'s in the ring $L[x_{1},\ldots,x_{n}]$.

    INPUT: $(m,\bm{w})\in\mathbb{N}_{>0}\times\mathbb{Q}^n$ such that $\bm{w}\in\trop(V(J))$.

    OUTPUT: $(N,p)\in\mathbb{N}\times\mathbb{C}[t,t^{-1}]^{n}$ such that $p(t^{1/N})$ coincides with the first $m$ terms of a solution 
    \hspace*{22mm}of $V(J)$ with $val(p)=\bm{w}$.
\end{algorithm}
\begin{remark}\label{software}
    The above algorithm is implemented in the software packages Gfan and SINGULAR in the case when $q_{i}\in \mathbb{Q}[t,x_{1},\ldots,x_{n}]$ for all $i$.
\end{remark}
We get the following result using Theorem \ref{tropicalqq}.
\begin{corollary}
   Assume that all the coefficients $d_{i}$'s of $\Lambda(z)$ are non-zero. Then for every given $m$, we can solve $\mathfrak{sl}_2$ Bethe Ansatz equations of the inhomogenous Gaudin model upto order $m$ of $t$ for small enough parameter $t$. If, moreover, $\Lambda(z)\in\mathbb{Q}[z]$, then there are software packages (Remark \ref{software}) to compute these solutions. 
\end{corollary}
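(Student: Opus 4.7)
The strategy is to chain three ingredients: the tropical basis result of Theorem~\ref{tropicalqq}, the lifting algorithm recalled above, and the surjection from non-degenerate $qq$-solutions to Bethe roots proved in \cite{BSZ}*{Theorem 5.11}. By Theorem~\ref{tropicalqq}, the defining polynomials $f_k$ of $\Def_{qq}$ form a tropical basis and the origin $\bm{0}$ lies in $\trop(\Def_{qq})$, so the hypotheses of the algorithm are satisfied with ideal $J=\langle f_k\rangle$ and tropical point $\bm{w}=\bm{0}$.

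Given the target truncation order $m$ from the statement of the corollary, feed $(m,\bm{0})$ into the algorithm. Its output is a pair $(N,p)$ such that $p(t^{1/N})$ agrees through order $m$ with a convergent Puiseux series point of $\Def_{qq}$ of valuation $\bm{0}$. Theorem~\ref{sl2analyticity} then upgrades this Puiseux point to an honestly analytic (integer-exponent) solution of the finite $qq$-system \eqref{sl2qq} lifting the prescribed isolated solution of the infinite $qq$-system; in particular the coordinates of the output corresponding to the roots of $q_+^{t}(z)$ approximate them up to order $m$ in $t$.

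To conclude, invoke \cite{BSZ}*{Theorem 5.11}: the map sending a non-degenerate solution $\{q_+^{t},q_-^{t}\}$ of \eqref{sl2qq} to the tuple of roots of $q_+^{t}$ is surjective onto the solutions of the Bethe equations \eqref{bethesl2}. Since the chosen infinite solution is isolated and all coefficients $d_i$ of $\Lambda(z)$ are non-zero, continuity forces the lifted $qq$-solution to remain non-degenerate for sufficiently small $t$, so the truncated algorithm output yields the claimed order-$m$ approximation of a tuple of Bethe roots. For the rational refinement, if $\Lambda(z)\in\mathbb{Q}[z]$ then the $f_k$ have coefficients in $\mathbb{Q}[t]$ and the Gfan and SINGULAR implementations cited in Remark~\ref{software} carry out the computation effectively.

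The main technical obstacle is the non-degeneracy verification for small nonzero $t$: at $t=0$ the roots of $q_+^{0}$ are themselves roots of $\Lambda$, so one must rule out a persistent coincidence $x_i(t)=z_k$ along the deformation. Such a coincidence would, via \eqref{sl2qq}, force $q_-^{t}(x_i(t))=0$ identically in $t$, which contradicts the isolated character of the initial infinite solution $\{q_+^{0},q_-^{0}\}$ and the hypothesis that all $d_i\ne 0$. Thus, for $|t|$ small enough the $x_i(t)$ detach from the fixed zero set of $\Lambda$ and from one another, and the surjection from \cite{BSZ} delivers a genuine Bethe solution.
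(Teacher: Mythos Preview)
Your core approach---chaining Theorem~\ref{tropicalqq}, the lifting algorithm, and the $qq$-to-Bethe surjection---is the intended one; the paper gives no explicit proof and simply records the corollary as following from Theorem~\ref{tropicalqq}.

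However, your final paragraph contains a genuine error. The claim that a persistent coincidence $x_i(t)\equiv z_k$ contradicts isolatedness of the infinite solution together with $d_i\ne 0$ is false. Take $\Lambda(z)=(z-1)^2$ (so $d_1=-2$, $d_2=1$, both nonzero) with $m=n=1$. The unique infinite solution is $q_+^0=q_-^0=z-1$, certainly isolated. But the constant family $q_+^t=q_-^t=z-1$ solves \eqref{sl2qq} for \emph{every} $t$, since $W(z-1,z-1)=0$; here the root of $q_+^t$ is identically $1$, a root of $\Lambda$, so this lift is degenerate for all $t$. Thus your deduction that $q_-^t(x_i(t))=0$ forces a contradiction fails.

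This does not break the corollary, and the repair is to drop the non-degeneracy claim altogether: you do not need every lifted $qq$-solution to be non-degenerate. Theorem~\ref{sl2analyticity} already asserts that \emph{all} solutions of \eqref{sl2qq}---and hence, via the surjection of \cite{BSZ}*{Theorem 5.11}, all Bethe solutions---arise as lifts of infinite solutions. The algorithm, run over the finitely many lifts of the tropical point $\bm{0}$ (Corollary~\ref{finitenessofdef}), produces order-$m$ truncations of every point of $\Def_{qq}$; the non-degenerate ones among these exhaust the Bethe solutions. In the example above the same infinite solution also lifts to a second branch $q_+^t(z)=z-1+2t$, which is non-degenerate and yields the Bethe root $w_1=1-2t$.
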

\section{\texorpdfstring{$QQ$}{QQ}-systems}\label{proofsofQQ}
\subsection{Proofs}\label{proofQQ}
In this section, we give proofs of Theorems \ref{QQforgeneralg}, \ref{analyticityQQ}, \ref{SL2analyticity} and \ref{tropicalQQ}.

We first give the proof of Theorem \ref{SL2analyticity}. Then we will sketch the proofs of Theorems \ref{QQforgeneralg} and \ref{analyticityQQ}.
\begin{proof}(of Theorem \ref{SL2analyticity})
As in the proof of Theorem \ref{sl2analyticity}, let us write $\big(x_{1}(0),\ldots,x_{m}(0),y_{1}(0)\ldots,y_{n}(0)\big)$ as $(b_{1},\ldots,b_{m+n})$. Let us suppose that $l$ of the $b_i$'s are distinct, say they are $b_{j_{1}},\ldots,b_{j_{l}}$.

For $k=1,\ldots,m+n$, define
\[
\tilde{g}_{k}:=q^me_{k}\bigg(\frac{x_{1}}{q},\ldots,\frac{x_{m}}{q},x_{1},\ldots,x_{m}\bigg)
-
tq^n
e_{k}\bigg(\frac{x_{1}}{q},\ldots,\frac{x_{m}}{q},y_{1},\ldots,y_{n}\bigg)
-
d_{k}(q^{m}-tq^{n}),
\]
and consider the following map:
\[
\phi:\mathbb{C}^{m}\times\mathbb{C}^{n}\times\mathbb{C}\rightarrow\mathbb{C}^{m+n}
\]
\[
(x_{1},\ldots,x_{m},y_{1},\ldots,y_{n},t)\mapsto(\tilde{g}_{1},\ldots,\tilde{g}_{m+n}).
\]
Rest of the proof of Theorem \ref{SL2analyticity} is similar to proof of Theorem \ref{sl2analyticity}.
\end{proof}
\begin{proof}
    (of Theorem \ref{QQforgeneralg})
    The argument in this case is similar to the proof of Theorem \ref{qqforgeneralg}.
\end{proof}
\begin{proof}
    (of Theorem \ref{analyticityQQ}) The argument in this case is similar to the proof of Theorem \ref{analyticityqq}.
\end{proof}
\begin{proof}
(of Theorem \ref{tropicalQQ})
  The proof is similar to the proof of Theorem \ref{tropicalqq} and using the fact that 
  \[
  \nu\bigg(\frac{1}{q^{m}-tq^{n}}\bigg)
=1.
\]
\end{proof}

\subsection{XXZ spin chain}\label{applicationspinchain} In this section, we give an application of Theorem \ref{SL2analyticity} and Theorem \ref{tropicalQQ} to the Bethe ansatz equations of the XXZ model (\cite{FKSZ}) associated to $U_{q}\hat{\mathfrak{sl}}_{2}$.

Let us say that $u,v\in\mathbb{C}^{\times}$ are $q$-distinct if $q^{\mathbb{Z}}u\cap q^{\mathbb{Z}}v=\varnothing$.
\begin{definition}
    Let $y_{1}(z),\ldots, y_{r}(z)$ and $\Lambda_{1}(z),\ldots,\Lambda_{r}(z)$ be a collection of non-zero polynomials. We say the collection $\{y_{1}(z),\ldots,y_{r}(z)\}$ is $q$-nondegenerate with respect to $\Lambda_{1}(z),\ldots,\Lambda_{r}(z)$ if the following conditions are satisfied:
    \begin{enumerate}[label=(\roman*)]
     \item the zeros of $\Lambda_{i}(z)$ are $q$-distinct from the zeros of $y_{i}(z)$, $i=1,\ldots,r$;
    \item for $i,j=1,\ldots,r$ such that $i\neq j$ and $a_{ij}\neq 0$, the zeros of $y_{i}(z)$ and $y_{j}(z)$ are $q$-distinct. 
    \end{enumerate}
    When $\Lambda_{i}(z)$'s are clear from the context, we will simply say that $\{y_{1}(z),\ldots,y_{r}(z)\}$ is $q$-nondegenerate.
\end{definition}
We say a solution $\{Q^{i}_{+}(z),Q^{i}_{-}(z)\}_{i=1,\ldots,r}$ of the $QQ$-system \eqref{QQsystem} is $q$-nondegenerate if it's positive part is $q$-nondegenerate, that is, $\{Q^{i}_{+}(z)\}_{i=1,\ldots,r}$ is $q$-nondegenerate.

For the rest of this section, assume that $G$ is simply-laced. Let us recall the Bethe ansatz equations of the XXZ model associated to $U_{q}\hat{\mathfrak{g}}$. 
\begin{definition}
    The Bethe ansatz equations of the XXZ model associated to $U_{q}\hat{\mathfrak{g}}$ corresponding to $Z\in H$, the polynomials $\Lambda_{1}(z),\ldots,\Lambda_{r}(z)$ 
    are the following equations in the unknowns $w_{k}^{i}$:
    \[
    \frac{Q^{i}_{+}(qw_{k}^{i})}{Q^{i}_{+}(q^{-1}w_{k}^{i})}
    \prod_{j}\zeta_{j}^{a_{ji}}
    =
    -
    \frac{\Lambda_{i}(w^{i}_{k})\prod_{j>i}[Q^{j}_{+}(qw_{k}^{i})]^{-a_{ji}}\prod_{j<i}[Q^{j}_{+}(w_{k}^{i})]^{-a_{ji}}}{\Lambda_{i}(q^{-1}w^{i}_{k})\prod_{j>i}[Q^{j}_{+}(w_{k}^{i})]^{-a_{ji}}\prod_{j<i}[Q^{j}_{+}(q^{-1}w_{k}^{i})]^{-a_{ji}}}
    ,\quad i=1,\ldots,r, 
\quad
k=1,\ldots,m_{i}.
    \]
\end{definition}
\begin{example}
    Let $G=SL_{2}$, $Z=\text{diag}(\zeta,\zeta^{-1})$ and $\Lambda(z)=\prod_{p=1}^{L}\prod_{j_{p}=0}^{r_{p}-1}(z-q^{-j_{p}}z_{p})$. Then the Bethe ansatz equations are
    \begin{equation}\label{betheSL2}
    q^{r}\prod_{p=1}^{L}\frac{w_{k}-q^{1-r}z_{p}}{w_{k}-qz_{p}}
    =
    -\zeta^{2}q^{m}\prod_{j=1}^{m}\frac{qw_{k}-w_{j}}{w_{k}-qw_{j}}
    ,\quad 
\quad
k=1,\ldots,m,
    \end{equation}
    where $r=\sum_{p=1}^{L}r_{p}$.
\end{example}
It is known (\cite{FKSZ}*{Theorem 6.4}) that there is a one-to-one correspondence between the set of $q$-nondegenerate polynomial solutions of the $QQ$-system \eqref{QQsystem} and the set of solutions of the above Bethe ansatz equations, which takes a non-degenerate solution $\{Q^{i}_{+}(z),Q^{i}_{-}(z)\}_{i=1,\ldots,r}$ to $\{w_{k}^{i}\}_{i=1,\ldots,r,\text{ }k=1,\ldots,\text{deg}(Q_{+}^{i})}$, where $w_{k}^{i}$ are the roots of $Q_{+}^{i}(z)$.

For $SL_2$, as an application of Theorem \ref{SL2analyticity} and Theorem \ref{tropicalQQ}, we can algorithmically solve the Bethe equations \eqref{betheSL2}.
\begin{corollary}
    Assume that all the coefficients $d_{i}$'s of $\Lambda(z)$ are non-zero. Then for every given $m$, we can solve the Bethe ansatz equations of the XXZ model associated to $U_{q}\hat{\mathfrak{sl}}_{2}$ upto order $m$ of $t$ for small enough parameter $t$. If, moreover $\Lambda(z)\in\mathbb{Q}[z]$, then there are software packages (Remark \ref{software}) to compute these solutions. 
\end{corollary}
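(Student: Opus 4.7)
The plan is to assemble the statement from three previously established ingredients: Theorem \ref{SL2analyticity} (existence and analyticity of the lifted solutions in $t$), Theorem \ref{tropicalQQ} (which pins down the tropical variety $\trop(\Def_{QQ})$ to the single point $(0,\ldots,0)$), and Algorithm 1 (which produces order-$m$ truncations of actual variety points sitting over a specified tropical point). The correspondence \cite{FKSZ}*{Theorem 6.4} between $q$-nondegenerate polynomial solutions of the $QQ$-system and solutions of the Bethe ansatz equations \eqref{betheSL2} then transports the algorithmic output from $\Def_{QQ}$ back to the Bethe side.

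Concretely, I would begin by fixing an isolated solution $(b_1,\ldots,b_{m+n})$ of the infinite $QQ$-system \eqref{infinitesl2QQ}. By Theorem \ref{SL2analyticity} there exist $\epsilon>0$ and analytic functions $x_i(t),y_j(t)$ solving the finite $QQ$-system \eqref{sl2QQ} with $x_i(0)=b_i$, $y_j(0)=b_{m+j}$ for $|t|<\epsilon$; viewed as convergent Puiseux series these determine a point of $\Def_{QQ}$ with coordinatewise valuation $\bm{0}$, consistent with Theorem \ref{tropicalQQ}. Next, since the hypothesis that all coefficients $d_i$ of $\Lambda(z)$ are nonzero is in force, Theorem \ref{tropicalQQ} confirms $\bm{0}\in\trop(\Def_{QQ})$, so Algorithm 1 applies to the ideal generated by $g_1,\ldots,g_{m+n}$ with input $(m,\bm{0})$; its output $(N,p)$ gives a polynomial whose substitution $p(t^{1/N})$ matches the first $m$ terms of the Puiseux expansion of a genuine point of $\Def_{QQ}$, and by Theorem \ref{tropicalQQ} this point must lie above $\bm{0}$, i.e., must be one of the analytic deformations of an infinite solution. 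Feeding the truncated roots into $Q_+^t(z)=\prod_i(z+x_i(t))$ and invoking \cite{FKSZ}*{Theorem 6.4} then yields approximate roots $w_k$ of \eqref{betheSL2} to the desired order in $t$.

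For the second assertion, if $\Lambda(z)\in\mathbb{Q}[z]$ then the $d_k$'s are rational, so each $g_k$ (after clearing the common factor $q^m-tq^n$) lies in $\mathbb{Q}[t,x_1,\ldots,x_m,y_1,\ldots,y_n]$; consequently Algorithm 1 runs in the setting of Remark \ref{software} and can be executed in Gfan or SINGULAR.

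The main subtlety I expect to encounter is ensuring that Algorithm 1, when fed the single tropical point $\bm{0}$, actually enumerates (rather than misses) the deformation branch of each chosen infinite solution, and that each produced truncation corresponds to a $q$-nondegenerate $Q_+^t$ so that the correspondence of \cite{FKSZ}*{Theorem 6.4} applies. The first issue is handled by Corollary \ref{finitenessofdef}: $\Def_{QQ}$ is finite, hence the algorithm sees every analytic branch among its output. The second issue is handled by the fact that $q$-nondegeneracy is an open condition on coefficients; since at $t=0$ the roots are fixed by the chosen infinite solution, shrinking $\epsilon$ preserves $q$-nondegeneracy for all small $t$, and this is the only place where one must be careful to choose $\epsilon$ uniformly over the finite list of infinite solutions provided by the hypothesis on $\Lambda(z)$.
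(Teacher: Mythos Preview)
Your proposal is correct and follows exactly the route the paper intends: the corollary is stated there without proof, as an immediate consequence of Theorem \ref{SL2analyticity}, Theorem \ref{tropicalQQ}, Algorithm 1, and the correspondence \cite{FKSZ}*{Theorem 6.4}, and you have simply spelled out that assembly in detail. Your additional remarks on finiteness via Corollary \ref{finitenessofdef} and on $q$-nondegeneracy being open are reasonable elaborations, not deviations.
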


\bibliographystyle{alpha}
\bibliography{references}
\Addresses
\end{document}